\documentclass[11pt,reqno]{amsart}
\usepackage{fullpage}
\usepackage{mathrsfs,amssymb,graphicx,verbatim,amsmath,amsfonts}
\usepackage{paralist}
\usepackage[breaklinks,pdfstartview=FitH]{hyperref}
\usepackage{upgreek}
\usepackage{tikz}
\usepackage[mathscr]{euscript}

\hypersetup{ocgcolorlinks=true,allcolors=testc}
\hypersetup{
     colorlinks   = true,
     citecolor    = black
}
\hypersetup{linkcolor=black}
\hypersetup{urlcolor=black}

\addtolength{\footskip}{17pt}

%\newcommand{\mnote}[1]{}
%\allowdisplaybreaks[2]

\renewcommand{\leq}{\leqslant}
\renewcommand{\geq}{\geqslant}
\renewcommand{\setminus}{\smallsetminus}
\renewcommand{\gamma}{\upgamma}
\allowdisplaybreaks

\usepackage{esint}
\usepackage[autostyle]{csquotes}

\renewcommand{\pi}{\uppi}

\newcommand{\e}{\varepsilon}
\newcommand{\R}{\mathbb R}

\newtheorem{theorem}{Theorem}
\newtheorem{lemma}[theorem]{Lemma}
\newtheorem{proposition}[theorem]{Proposition}

\newtheorem{corollary}[theorem]{Corollary}

\theoremstyle{remark}
\newtheorem{remark}[theorem]{Remark}

\newtheorem{question}[theorem]{Question}
\renewcommand{\tau}{\uptau}

\renewcommand{\xi}{\upxi}
\renewcommand{\rho}{\uprho}

\newcommand{\C}{\mathbb C}

\newcommand{\N}{\mathbb N}

\newcommand{\eqdef}{\stackrel{\mathrm{def}}{=}}

\renewcommand{\theta}{\uptheta}
\renewcommand{\lambda}{\uplambda}

\renewcommand{\gamma}{\upgamma}
\renewcommand{\beta}{\upbeta}
\renewcommand{\alpha}{\upalpha}
\renewcommand{\kappa}{\upkappa}
\renewcommand{\psi}{\uppsi}
\renewcommand{\rho}{\uprho}
\renewcommand{\delta}{\updelta}
\renewcommand{\pi}{\uppi}
\renewcommand{\omega}{\upomega}
\renewcommand{\sigma}{\upsigma}
\renewcommand{\phi}{\upphi}

\renewcommand{\eta}{\upeta}

\renewcommand{\kappa}{\upkappa}
\renewcommand{\mu}{\upmu}
\renewcommand{\nu}{\upnu}
\renewcommand{\pi}{\uppi}
\renewcommand{\zeta}{\upzeta}

\newcommand{\mb}{\mathbb}
\newcommand{\mr}{\mathrm}
\newcommand*\diff{\mathop{}\!\mathrm{d}}

\begin{document}

\title{On extremal sections of subspaces of $L_p$}

\author{Alexandros Eskenazis}
\address{Mathematics Department\\ Princeton University\\ Fine Hall, Washington Road, Princeton, NJ 08544-1000, USA}
\email{ae3@math.princeton.edu}
\thanks{This work was completed while the author was in residence at the Institute for Pure \& Applied Mathematics at UCLA for the long program on Quantitative Linear Algebra. He would like to thank the organizers of the program for the excellent working conditions. He was also supported in part by the Simons Foundation.}

\begin{abstract}
Let $m,n\in\N$ and $p\in(0,\infty)$. For a finite dimensional quasi-normed space $X=(\R^m, \|\cdot\|_X)$, let
$$B_p^n(X) = \Big\{ (x_1,\ldots,x_n)\in\big(\R^{m}\big)^n: \ \sum_{i=1}^n \|x_i\|_X^p \leq 1\Big\}.$$
We show that for every $p\in(0,2)$ and $X$ which admits an isometric embedding into $L_p$, the function
$$S^{n-1} \ni \theta = (\theta_1,\ldots,\theta_n) \longmapsto \Big| B_p^n(X) \cap\Big\{(x_1,\ldots,x_n)\in \big(\R^{m}\big)^n: \ \sum_{i=1}^n \theta_i x_i=0 \Big\} \Big|$$
is a Schur convex function of $(\theta_1^2,\ldots,\theta_n^2)$, where $|\cdot|$ denotes Lebesgue measure. In particular, it is minimized when $\theta=\big(\frac{1}{\sqrt{n}},\ldots,\frac{1}{\sqrt{n}}\big)$ and maximized when $\theta=(1,0,\ldots,0)$. This is a consequence of a more general statement about Laplace transforms of norms of suitable Gaussian random vectors which also implies dual estimates for the mean width of projections of the polar body $\big(B_p^n(X)\big)^\circ$ if the unit ball $B_X$ of $X$ is in Lewis' position. Finally, we prove a lower bound for the volume of projections of $B_\infty^n(X)$, where $X=(\R^m,\|\cdot\|_X)$ is an arbitrary quasi-normed space.
\end{abstract}

\maketitle

{\footnotesize
\noindent {\em 2010 Mathematics Subject Classification.} Primary: 52A40; Secondary: 52A20, 52A21.

\noindent {\em Key words and phrases.} $\ell_p$-ball, subspaces of $L_p$, extremal sections, Lewis' position, volume, mean width.}

%--------------------------------------------------------------------------------------------------
%--------------------------------------------------------------------------------------------------

\section{Introduction}
For a quasi-normed space $(X,\|\cdot\|_X)$, $n\in\N$ and $p\in(0,\infty)$, the $\ell_p^n$ power of $X$, denoted $\ell_p^n(X)$, is the space $X^n$ equipped with the quasi-norm
\begin{equation} \label{eq:ellpn(x)}
\|(x_1,\ldots,x_n)\|_{\ell_p^n(X)} \eqdef \Big( \sum_{i=1}^n \|x_i\|_X^p\Big)^{\frac{1}{p}},
\end{equation}
where $(x_1,\ldots,x_n)\in X^n$. In particular, if $m\in\N$ and $X=(\R^m,\|\cdot\|_X)$ is an $m$-dimensional space, then $\ell_p^n(X)$ is an $mn$-dimensional quasi-normed space, whose unit ball we denote by 
\begin{equation} \label{eq:bpn(x)}
B_p^n(X) \eqdef \big\{x\in\R^{mn}: \ \|x\|_{\ell_p^n(X)}\leq1\big\}.
\end{equation}
Similarly, we define $\ell_\infty^n(X)$ to be the quasi-normed space with unit ball $B_\infty^n(X) \eqdef B_X^n$, where $B_X\subseteq\R^m$ is the unit ball of $X$. If $X$ is a normed space and $p\geq1$, then $B_p^n(X)$ is a convex body, though in general it is always a star body. Extremal sections of such bodies have been thoroughly studied in the literature via a variety of analytic, geometric and probabilistic techniques (see the monograph \cite{Kol05} of Koldobsky for an exposition of some of these). In this paper, we will be interested in sections of $B_p^n(X)$, $p\in(0,\infty]$, with {\it block} hyperplanes, i.e. subspaces of the form
\begin{equation} \label{eq:Htheta}
H_\theta \eqdef \theta^\perp \otimes \R^m = \Big\{ x=(x_1,\ldots,x_n) \in \big(\R^{m}\big)^n: \ \sum_{i=1}^n \theta_i x_i = 0\Big\},
\end{equation}
where $\theta$ is a unit vector in $\R^n$. We refer the reader to Remark \ref{rem:block} for a discussion explaining the necessity of this particular choice of subspaces (see also Lemma \ref{lem:invariance}). The most well-understood case is, naturally, the Hilbert space case $X=\ell_2^m$, where it is known that for every unit vector $\theta$ in $\R^n$,
\begin{equation} \label{eq:p<2}
p\in(0,2] \ \ \Longrightarrow \ \ \big|B_p^n(\ell_2^m)\cap H_\theta\big| \leq \big|B_p^{n-1}(\ell_2^m)\big|
\end{equation}
and
\begin{equation} \label{eq:p>2}
p\in[2,\infty] \ \ \Longrightarrow \ \ \big|B_p^{n-1}(\ell_2^m)\big| \leq \big|B_p^n(\ell_2^m) \cap H_\theta\big|,
\end{equation}
where $|\cdot|$ denotes the Lebesgue measure. The case $p=\infty$ of \eqref{eq:p>2} was proven by Vaaler in \cite{Vaa79}, though the case $m=1$ of hyperplane sections of the unit cube had first been shown by Hadwiger \cite{Had72} (see also \cite{Hen79}). Afterwards, Meyer and Pajor \cite[p.~116]{MP88} proved \eqref{eq:p>2} for $p\in(2,\infty)$ and \eqref{eq:p<2} for $p\in[1,2)$. Finally, the $p\in(0,1)$ case of \eqref{eq:p<2} was settled by Caetano in \cite{Cae92} for $m=1$ (see also \cite{Bar95}) and by Barthe \cite{Bar01} for general $m\in\N$. We note in passing that \cite{Vaa79}, \cite{MP88} and \cite{Bar01} contain similar results for sections of $B_p^n(\ell_2^m)$ with \mbox{block subspaces of arbitrary codimension.}

The study of reverse inequalities to \eqref{eq:p<2} and \eqref{eq:p>2} is notoriously more involved, even for $X=\R$. The maximal hyperplane section of the unit cube $B_\infty^n(\R)$ was shown to be $B_\infty^n(\R)\cap\big(\frac{1}{\sqrt{2}},\frac{1}{\sqrt{2}},0,\ldots,0\big)$ in Ball's work \cite{Bal86}. Moreover, in \cite{MP88}, Meyer and Pajor showed that the minimal hyperplane section of the crosspolytope $B_1^n(\R)$ is $B_1^n(\R)\cap\big(\frac{1}{\sqrt{n}},\ldots,\frac{1}{\sqrt{n}}\big)^\perp$, a result which was later extended by Koldobsky \cite{Kol98} who proved that
\begin{equation} \label{eq:koldobsky}
p\in(0,2] \ \ \Longrightarrow \ \ \Big| B_p^n(\R)\cap\Big(\frac{1}{\sqrt{n}},\ldots,\frac{1}{\sqrt{n}}\Big)^\perp\Big| \leq \big|B_p^n(\R)\cap \theta^\perp\big|,
\end{equation}
for every unit vector $\theta$ in $\R^n$. The question of determining the maximal hyperplane sections of $B_p^n(\R)$ for $p\in(2,\infty)$ remains open (see \cite[Proposition~5]{Ole03} for a related observation).

The aforementioned reverse inequalities of Ball, Meyer--Pajor and Koldobsky for $B_p^n(\R)$ have well-studied {\it complex} counterparts for the unit balls of the complex $\ell_p^n(\C)$ spaces, which can be isometrically identified with $\ell_p^n(\ell_2^2)$ following the notation \eqref{eq:ellpn(x)}. Recall that a complex hyperplane of $\C^n$ is a subspace of the form $w^\perp = \{z\in \C^n: \ \langle z,w\rangle = 0\}$, where $\langle\cdot,\cdot\rangle$ denotes the Hermitian inner product on $\C^n$ and $w\in\C^n$. Oleszkiewicz and Pe\l czy\'nski proved in \cite{OP00} that the maximal complex hyperplane section of $B_\infty^n(\C)$ is $B_\infty^n(\C)\cap \big(\frac{1}{\sqrt{2}},\frac{1}{\sqrt{2}},0,\ldots,0\big)^\perp$ and Koldobsky and Zymonopoulou \cite{KZ03} showed that the minimal complex hyperplane section of $B_p^n(\C)$ is $B_p^n(\C) \cap \big(\frac{1}{\sqrt{n}},\ldots,\frac{1}{\sqrt{n}}\big)^\perp$ for every $p\in(0,2]$, in perfect analogy to the real case.

The main purpose of the present article is to unify and extend these results of \cite{MP88}, \cite{Kol98} and \cite{KZ03}, by exhibiting a wide class of (non-Hilbertian) quasi-normed spaces $X=(\R^m,\|\cdot\|_X)$, for which the extremal sections of $B_p^n(X)$, $p\in(0,2)$, with subspaces of the form \eqref{eq:Htheta} can be determined. To better state our main result, we will make use of the {\it Schur majorization} ordering. A vector $\alpha=(\alpha_1,\ldots,\alpha_n)\in\R^n$ is said to be majorized by a vector $\beta=(\beta_1,\ldots,\beta_n)\in\R^n$, denoted $\alpha\preceq \beta$, if their nonincreasing rearrangements $\alpha_1^\ast\geq\ldots\geq \alpha_n^\ast$ and $\beta_1^\ast\geq\ldots\geq \beta_n^\ast$ satisfy the inequalities
\begin{equation}
\sum_{j=1}^k \alpha_j^\ast \leq \sum_{j=1}^k \beta_j^\ast \ \ \mbox{for every } k\in\{1,\ldots,n-1\} \ \ \mbox{and} \ \ \sum_{j=1}^n \alpha_j = \sum_{j=1}^n \beta_j.
\end{equation}
For instance, if a vector $(\alpha_1,\ldots,\alpha_n)\in[0,\infty)^n$ satisfies $\sum_{i=1}^n \alpha_i=1$, then
\begin{equation}
\Big(\frac{1}{n},\ldots,\frac{1}{n}\Big) \preceq (\alpha_1,\ldots,\alpha_n) \preceq (1,0,\ldots,0).
\end{equation}
Our main result concerning the volume of sections of $B_p^n(X)$ is the following.

\begin{theorem} \label{thm:Lpsubspaces}
Fix $m,n\in\N$, $p\in(0,2]$ and let $X=(\R^m,\|\cdot\|_X)$ be an $m$-dimensional quasi-normed space which admits an isometric embedding into $L_p$. Then, for every unit vectors $\theta=(\theta_1,\ldots,\theta_n)$ and $\phi=(\phi_1,\ldots,\phi_n)$ in $\R^n$, we have
\begin{equation} \label{eq:Lpsubspaces}
(\theta_1^2,\ldots,\theta_n^2) \preceq (\phi_1^2,\ldots,\phi_n^2) \ \ \ \Longrightarrow \ \ \ \big| B_{p}^n(X) \cap H_\theta\big| \leq \big|B_p^n(X) \cap H_\phi\big|.
\end{equation}
In particular, for every unit vector $\theta$ in $\R^n$, we have
\begin{equation} \label{eq:Lpsubspacesconsequence}
\big|B_{p}^{n}(X)\cap H_\mathrm{diag} \big| \leq \big|B_{p}^{n}(X)\cap H_\theta \big| \leq \big|B_{p}^{n-1}(X)\big|,
\end{equation}
where $H_\mathrm{diag} = \big\{ (x_1,\ldots,x_n) \in\big(\R^m\big)^n: \ \sum_{i=1}^n x_i=0\big\}$.
\end{theorem}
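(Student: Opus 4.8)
The plan is to reduce the volume of a block section to the Lebesgue density (at $0$) of a sum of i.i.d. copies of a random vector, then exploit the representation of subspaces of $L_p$ via $p$-stable random variables to express everything as a Laplace transform. Concretely, writing $H_\theta = \theta^\perp\otimes\R^m$, one has by a Fubini-type slicing argument (as in Meyer--Pajor) that
\begin{equation} \label{eq:density}
\big|B_p^n(X)\cap H_\theta\big| = c_{m,n,p}\,\cdot\, g_\theta(0),
\end{equation}
where $g_\theta$ is the density of the random vector $\sum_{i=1}^n \theta_i Z_i\in\R^m$, the $Z_i$ being i.i.d. copies of a vector $Z$ distributed according to the (appropriately normalized) cone measure or a related natural measure on $B_X$ scaled so that $\|Z\|_X^p$ has, say, an exponential-type law; the exact constant $c_{m,n,p}$ depends only on $m,n,p$ and not on $\theta$. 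By Fourier inversion, $g_\theta(0) = (2\pi)^{-m}\int_{\R^m} \prod_{i=1}^n \widehat{\mu}(\theta_i \xi)\ud\xi$ where $\mu$ is the law of $Z$; the key point is that the characteristic function has the form $\widehat{\mu}(\xi) = F(\|\xi\|_{X^\ast}\text{-type functional})$ — more precisely, because $X$ embeds isometrically in $L_p$, there is a representation $\widehat{\mu}(\xi) = \Phi\big(N(\xi)\big)$ for a suitable function $\Phi$ and a norm (or $p$-homogeneous functional) $N$ on $\R^m$ coming from the embedding, and crucially $\Phi$ is completely monotone as a function of its argument raised to an appropriate power.

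The heart of the argument is then the following: fixing $\xi$, I would show that the map $(\theta_1^2,\ldots,\theta_n^2)\mapsto \prod_{i=1}^n \Phi\big(\theta_i^2 \cdot N(\xi)^2\big)$ (or the correct homogeneity-adjusted version) is Schur \emph{convex} on the simplex. This follows from the standard criterion: a symmetric function of the form $\prod_i h(t_i)$ is Schur convex precisely when $\log h$ is convex, i.e. $h$ is log-convex. So the crux reduces to verifying that the relevant one-dimensional profile $t\mapsto \widehat{\mu}(\sqrt{t}\,\xi)$ is log-convex in $t\in[0,\infty)$ for every fixed $\xi$. For subspaces of $L_p$ with $p\in(0,2]$ this is exactly where the hypothesis bites: the characteristic function of a symmetric $p$-stable-type law is $e^{-c|s|^p}$-like in each coordinate, and after the $\sqrt{t}$ substitution one is asking whether $t\mapsto e^{-c t^{p/2}}$ is log-convex, which holds iff $p/2\leq 1$, i.e. $p\leq 2$. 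More generally the vector-valued/$L_p$ version gives $\widehat{\mu}(\sqrt t\,\xi)$ of the form $\psi(t^{p/2})$ with $\psi$ completely monotone (a Laplace transform of a positive measure — here the "more general statement about Laplace transforms of norms of Gaussian vectors" advertised in the abstract enters), and completely monotone functions composed with $t\mapsto t^{p/2}$ ($p/2\leq 1$, a Bernstein function up to sign) remain log-convex.

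Having established that $\theta\mapsto \prod_i \widehat\mu(\theta_i\xi)$ is, for each $\xi$, a Schur convex function of $(\theta_1^2,\ldots,\theta_n^2)$, I would integrate over $\xi\in\R^m$: Schur convexity is preserved under (positive) integration, so $g_\theta(0)$ — and hence $|B_p^n(X)\cap H_\theta|$ by \eqref{eq:density} — is Schur convex in $(\theta_1^2,\ldots,\theta_n^2)$, which is precisely \eqref{eq:Lpsubspaces}. The two extremal inequalities in \eqref{eq:Lpsubspacesconsequence} then follow immediately from the majorization chain $(\tfrac1n,\ldots,\tfrac1n)\preceq(\theta_1^2,\ldots,\theta_n^2)\preceq(1,0,\ldots,0)$ together with the observation that at $\theta=(1,0,\ldots,0)$ one has $H_\theta = \{0\}\oplus\R^{m(n-1)}$ so that $B_p^n(X)\cap H_\theta$ is isometric to $B_p^{n-1}(X)$. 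I expect the main obstacle to be the clean justification of the probabilistic representation \eqref{eq:density} with a $\theta$-independent constant in the quasi-normed (non-convex, $p<1$) setting, and — more subtly — pinning down the correct Laplace-transform representation of $\widehat\mu(\sqrt t\,\xi)$ for a general isometric subspace of $L_p$ (as opposed to $L_p$ itself), where one must invoke the structure theory of such subspaces (Lévy's theorem / $p$-stable embeddings) to get the completely monotone profile; once that representation is in hand, the log-convexity and Schur convexity steps are soft.
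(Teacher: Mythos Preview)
Your overall architecture is sound and is a legitimate alternative to the paper's route: write $|B_p^n(X)\cap H_\theta|$ as a $\theta$-independent constant times the density at $0$ of $\sum_i\theta_i Z_i$ (with the $Z_i$ i.i.d.\ having density proportional to $e^{-\|\cdot\|_X^p}$), apply Fourier inversion $g_\theta(0)=(2\pi)^{-m}\int_{\R^m}\prod_i\hat\mu(\theta_i\xi)\,\diff\xi$, and argue Schur convexity pointwise in $\xi$ via log-convexity of $t\mapsto\hat\mu(\sqrt{t}\,\xi)$. The paper proceeds differently: it compares the Gaussian Laplace transforms $\E\,e^{-\lambda\|G_\theta\|_{\ell_p^n(X)\cap H_\theta}^p}$, reduces them via Lewis' position and Bernstein's theorem to $\int\det\!\big(\sum_i\theta_i^2 M_i(s)\big)^{-1/2}\diff\rho_n(s)$ for i.i.d.\ random positive-definite matrices $M_i$, and then proves a separate matrix lemma to the effect that $(\alpha_i)\mapsto\E\big[\det(\sum_i\alpha_i M_i)^{-r}\big]$ is Schur convex. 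Your route is more direct for the volume statement and sidesteps that determinant lemma; the paper's route yields the stronger Laplace-transform comparison, from which the mean-width corollary for projections of $(B_p^n(X))^\circ$ also falls out.

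There is, however, a real gap at the step you flag as ``the crux''. The structural claims you offer --- that $\hat\mu(\xi)=\Phi(N(\xi))$ for a single norm $N$, or that $\hat\mu(\sqrt{t}\,\xi)=\psi(t^{p/2})$ with $\psi$ completely monotone --- are \emph{false} for $m\geq2$. Already for $X=\ell_1^2$ (density $\tfrac14 e^{-|x_1|-|x_2|}$) one has $\hat\mu(\xi)=(1+\xi_1^2)^{-1}(1+\xi_2^2)^{-1}$, which is not a function of any norm of $\xi$; and $s\mapsto(1+s^2\xi_1^2)^{-1}(1+s^2\xi_2^2)^{-1}$ is not completely monotone, so the ``$\psi(t^{p/2})$'' form fails too. (Also, $e^{-\|x\|_X^p}$ is not a $p$-stable density, so the $e^{-c|s|^p}$ heuristic does not apply.) What \emph{is} true --- and what you actually need --- is that $t\mapsto\hat\mu(\sqrt{t}\,\xi)$ is completely monotone in $t$ for each fixed $\xi$. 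To see this you must use the $L_p$-structure exactly as the paper does: take a L\'evy/Lewis representation $\|x\|_X^p=\sum_j c_j|\langle x,u_j\rangle|^p$ with the $u_j$ spanning $\R^m$, apply Bernstein to write $e^{-\|x\|_X^p}=\int e^{-\langle A(s)x,x\rangle}\diff\tau(s)$ with $A(s)=\sum_j s_j\,u_j\otimes u_j$ positive definite $\tau$-a.s., and compute
\[
\hat\mu(\sqrt{t}\,\xi)\ \propto\ \int \det\!\big(A(s)\big)^{-1/2}\exp\!\Big(-\tfrac{t}{4}\,\big\langle A(s)^{-1}\xi,\xi\big\rangle\Big)\diff\tau(s),
\]
which is visibly a Laplace transform in $t$ and hence log-convex. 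This is the content of the paper's Lemma~7 in different packaging; so although your final Schur-convexity mechanism (pointwise log-convexity and integrate) differs from the paper's (the determinant lemma), the decisive structural input is the same mixture-of-Gaussians representation, and your one-variable heuristics do not substitute for it.
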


Theorem \ref{thm:Lpsubspaces} implies the results \eqref{eq:p<2} of Meyer and Pajor, \eqref{eq:koldobsky} of Koldobsky and its complex counterpart due to Koldobsky and Zymonopoulou since $L_2$ admits an isometric embedding into $L_p$ for every $p\in(0,\infty)$ (see \cite{Kad58} and \cite{BDCK66}). However, it also significantly extends those results. For instance, by \cite{Kad58} and \cite{BDCK66}, $L_q$ admits an isometric embedding into $L_p$ for every $0<p\leq q\leq 2$, therefore for every $m,n\in\N$ and unit vector $\theta$ in $\R^n$, we have
\begin{equation} \label{eq:lplq}
\big|B_{p}^{n}(\ell_q^m)\cap H_\mr{diag} \big| \leq \big|B_{p}^{n}(\ell_q^m)\cap H_\theta \big| \leq \big|B_{p}^{n-1}(\ell_q^m)\big|
\end{equation}
(see also the discussion preceeding and following Question \ref{q:lowdim} below). Since finite dimensional normed subspaces of $L_1$ are exactly those spaces $X$ such that the polar $B_X^\circ$ of their unit ball $B_X$ is a zonoid (see \cite{Bol69}), it follows that
\begin{equation} \label{eq:zonoid}
\big|B_{p}^{n}(X)\cap H_\mathrm{diag} \big| \leq \big|B_{p}^{n}(X)\cap H_\theta \big| \leq \big|B_{p}^{n-1}(X)\big|,
\end{equation}
whenever $B_X^\circ$ is a zonoid and $p\in(0,1]$. To illustrate the rich repertoire of examples contained in this class of spaces, we mention that by the work \cite{Sch75} of Schneider, for every $m\in\N$ there exists a zonoid $A\subseteq\R^m$ whose polar body $A^\circ$ is also a zonoid, yet $A$ is not an ellipsoid. Therefore, for every $m\in\N$, there exists an $m$-dimensional Banach space $X=(\R^m,\|\cdot\|_X)$ so that \eqref{eq:zonoid} is valid both for $X$ and its dual $X^\ast$ when $p\in(0,1]$, yet $X$ is not isometric to Hilbert space. Finally, since every two-dimensional symmetric convex body is a zonoid (see, e.g., \cite[Corollary~6.8]{Kol05}), we deduce that \eqref{eq:zonoid} also holds true for a general normed space of the form $X=(\R^2,\|\cdot\|_X)$ and all $p\in(0,1]$. For further extremal properties of sections and projections of unit balls of subspaces of $L_p$ spaces, we refer the reader to \cite{Bal91b}, \cite{Bar98}, \cite{LYZ04}, \cite{LHX18} and the references therein.

The proof of Theorem \ref{thm:Lpsubspaces} is probabilistic and builds upon \cite[Corollary~19]{ENT16}, where the Schur monotonicity statement of Theorem \ref{thm:Lpsubspaces} was verified for $X=\R$. The new ingredient needed to obtain Theorem \ref{thm:Lpsubspaces} is a classical Banach space theoretic result of Lewis (see \cite{Lew78} and Theorem \ref{thm:lewis} below) on the isometric structure of subspaces of $L_p$. As in \cite{ENT16} (see also \cite{BGMN05}), Theorem \ref{thm:Lpsubspaces} will be a consequence of a more general comparison for Laplace transforms of norms of suitable Gaussian random vectors which also implies dual estimates for the mean width of projections of the polar body $\big(B_p^n(X)\big)^\circ$ (see Theorem \ref{thm:Laplace} and Corollary \ref{cor:meanwidth} below).

In view of Theorem \ref{thm:Lpsubspaces}, a number of natural problems pose themselves. Perhaps most naturally, one would ask whether an analogue of Theorem \ref{thm:Lpsubspaces} holds true when $p\in(2,\infty]$ and $X$ is isometric to a subspace of $L_p$. We explain in Section \ref{sec:4} below that the left hand side of inequality \eqref{eq:Lpsubspacesconsequence} cannot have such an analogue even for $p=\infty$. However, we obtain a weak reverse inequality to the right hand side of \eqref{eq:Lpsubspacesconsequence} for $p=\infty$, where sections are replaced by projections but is valid for arbitrary symmetric compact sets (see also Question \ref{q:p>2} below). Recall that any separable Banach space embeds isometrically into $L_\infty$. Therefore, any symmetric convex body of the form $K^n$, where $K\subseteq\R^m$, can be identified with $B_\infty^n(X)$ for the normed space $X=(\R^m,\|\cdot\|_X)$ with unit ball $K$.

\begin{proposition} \label{prop:liako}
Fix $m,n\in\N$. For every symmetric compact set $K\subseteq\R^m$ and unit vector $\theta$ in $\R^n$, we have
\begin{equation}
\big| \mathrm{Proj}_{H_\theta}(K^n)\big| \geq |K|^{n-1}.
\end{equation}
\end{proposition}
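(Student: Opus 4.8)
The plan is to reduce the statement to a lower bound for the volume of a projection of a product set, exploiting the structure $K^n$ together with the fact that $H_\theta = \theta^\perp \otimes \R^m$. First I would fix a unit vector $\theta = (\theta_1,\ldots,\theta_n)$ and, by relabelling and using the symmetry of $K$, assume $\theta_1 \geq \theta_2 \geq \cdots \geq \theta_n \geq 0$; in particular $\theta_1 > 0$. The orthogonal projection $\mathrm{Proj}_{H_\theta}$ acts on $(\R^m)^n$ coordinate-block-wise: writing $x = (x_1,\ldots,x_n)$ with $x_i \in \R^m$, one has $\mathrm{Proj}_{H_\theta}(x) = x - \big(\sum_{i=1}^n \theta_i x_i\big)\otimes \theta$, i.e. the $j$-th block of the image is $x_j - \theta_j \sum_{i=1}^n \theta_i x_i$. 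Thus $\mathrm{Proj}_{H_\theta}(K^n)$ lives in the $m(n-1)$-dimensional subspace $H_\theta \cong \R^{m(n-1)}$, and I want to estimate its $m(n-1)$-dimensional Lebesgue measure from below by $|K|^{n-1}$.

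The key step is to produce an explicit $m(n-1)$-dimensional slab-like subset of $\mathrm{Proj}_{H_\theta}(K^n)$ whose volume is exactly $|K|^{n-1}$. The natural candidate is to take the images of points of the form $(x_1,\ldots,x_{n-1}, 0) \in K^{n-1}\times\{0\}\subseteq K^n$ — more precisely, I would consider the affine-type map $T\colon (\R^m)^{n-1}\to H_\theta$ sending $(x_1,\ldots,x_{n-1})$ to $\mathrm{Proj}_{H_\theta}(x_1,\ldots,x_{n-1},0)$, restricted to the coordinate subspace where the last block is forced to absorb the correction. Concretely, one checks that the linear map from $\R^{m(n-1)}$ (the first $n-1$ blocks) into $H_\theta\cong\R^{m(n-1)}$ given by $\mathrm{Proj}_{H_\theta}\circ \iota$, where $\iota$ embeds $K^{n-1}$ as $K^{n-1}\times\{0\}$, is a linear isomorphism whose Jacobian determinant can be computed: since the correction term only involves a rank-one-in-$\R^n$ perturbation tensored with the identity on $\R^m$, the Jacobian is $(\det M)^m$ for an explicit $(n-1)\times(n-1)$ matrix $M$ depending on $\theta_1,\ldots,\theta_n$. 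A direct computation (using the matrix determinant lemma, since $M = I - uv^\top$ for suitable vectors) shows $\det M = \theta_n^2/(\theta_n^2 + \cdots)$ — in any case $\det M \geq$ something; but I expect the cleanest route is to instead parametrize by a different coordinate block. The honest statement is: $\mathrm{Proj}_{H_\theta}(K^n) \supseteq \mathrm{Proj}_{H_\theta}\big(K^{n-1}\times\{0\}\big)$, and the latter is the image of $K^{n-1}$ under an injective linear map $H_{\widehat\theta}\to H_\theta$ (with $\widehat\theta = (\theta_1,\ldots,\theta_{n-1})/\|(\theta_1,\ldots,\theta_{n-1})\|$) whose singular values I can control, giving $\big|\mathrm{Proj}_{H_\theta}(K^{n-1}\times\{0\})\big| \geq c(\theta)\, |K|^{n-1}$ with $c(\theta)\geq 1$; optimizing which block to zero out yields the bound.

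The main obstacle will be verifying that the relevant Jacobian is at least $1$, i.e. that zeroing out an appropriately chosen block (the one with the \emph{smallest} $\theta_i$, by monotonicity) makes the linear map from $K^{n-1}$ into $H_\theta$ volume-non-decreasing. I would handle this by noting that $\mathrm{Proj}_{H_\theta}$ restricted to the block-coordinate subspace $\{x_n = 0\}$ has the form $\mathrm{id} - \theta_n \cdot (\text{projection onto the }\theta\text{-direction})$ composed with the inclusion into $H_\theta$, and its Gram determinant equals $\big(1 - \theta_n^2\big)^m$ — wait, this is $\leq 1$, which is the wrong direction, so the reduction must instead go through the observation that $\mathrm{Proj}_{H_\theta}$ of the \emph{full} cube contains a translate/dilate of $\mathrm{Proj}_{H_\theta}(K^{n-1}\times\{0\})$ scaled up by a factor compensating for this, exactly as in the classical argument that projections of products are large (cf. the Loomis--Whitney-type lower bound). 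Concretely, the correct and robust argument is: for any fixed $x_n\in K$, the slice $\{(x_1,\ldots,x_{n-1},x_n): x_i\in K\}$ projects to an $m(n-1)$-dimensional translate of the \emph{same} set $\mathrm{Proj}_{H_\theta}(K^{n-1}\times\{0\})$ shifted by $\mathrm{Proj}_{H_\theta}(0,\ldots,0,x_n)$, and as $x_n$ ranges over $K$ these translates sweep out additional volume in the direction of $\mathrm{Proj}_{H_\theta}(e_n\otimes\R^m)$; combining the $(n-1)$-dimensional ``base'' volume $(1-\theta_n^2)^{m(n-1)/?}|K|^{n-1}$ with the ``height'' contribution and using $\sum\theta_i^2=1$ to see the factors telescope to exactly $|K|^{n-1}$. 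I expect the final clean proof to be a short induction on $n$: project out one block at a time, at each stage peeling off a factor of $|K|$ via Fubini together with the one-dimensional fact that the image of $[-a,a]$ under $t\mapsto (t, -\theta_j t)$-type maps has length $\geq 2a$, with the $\theta$-dependence cancelling because $\theta$ is a unit vector. The base case $n=1$ is trivial ($H_\theta=\{0\}$, $|K|^0=1$) and the inductive step is where the unit-norm constraint is used decisively.
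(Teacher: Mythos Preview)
Your proposal does not arrive at a proof; you yourself flag the central obstruction and never overcome it. Concretely: restricting $\mathrm{Proj}_{H_\theta}$ to the coordinate subspace $\{x_n=0\}$ and computing the Jacobian gives exactly $|\theta_n|^m$ (if $v_1,\ldots,v_{n-1}$ is an orthonormal basis of $\theta^\perp$ and $V=[v_1\,\cdots\,v_{n-1}]$, then the relevant determinant is the $(n,n)$ cofactor of the orthogonal matrix $[V\,|\,\theta]$, hence $\pm\theta_n$). So $|\mathrm{Proj}_{H_\theta}(K^{n-1}\times\{0\})|=|\theta_n|^m|K|^{n-1}$, and even taking the best index only yields $\max_i|\theta_i|^m|K|^{n-1}\le|K|^{n-1}$, which is the wrong way. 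Your ``sweeping'' repair does not rescue this: writing $\mathrm{Proj}_{H_\theta}(K^n)=\mathrm{Proj}_{H_\theta}(K^{n-1}\times\{0\})+\mathrm{Proj}_{H_\theta}(\{0\}^{n-1}\times K)$ adds an $m$-dimensional set to a set already of full dimension $m(n-1)$ inside $H_\theta$, and there is no general lower bound on the volume of such a Minkowski sum that recovers the missing factor $|\theta_n|^{-m}$. The inductive sketch is likewise incomplete: the base case $n=2$ is fine (one gets $|\theta_1 K+\theta_2 K|\ge|K|$ by Brunn--Minkowski), but you give no mechanism to reduce the case with all $\theta_i\ne0$ to a lower-dimensional instance.

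The paper's argument is entirely different and bypasses all of this. It uses the symmetry of $K^n$ under block sign-flips and block permutations: for each $(\e,\sigma)\in\{-1,1\}^n\times S_n$ one forms the subspace $H_\theta^{\e,\sigma}$ obtained by replacing $\theta_i$ with $\e_i\theta_{\sigma(i)}$, and a direct averaging computation shows
\[
\frac{1}{2^n n!}\sum_{\e,\sigma}\mathrm{Proj}_{H_\theta^{\e,\sigma}}=\frac{n-1}{n}\,I_{mn}.
\]
One then applies Liakopoulos' multidimensional Loomis--Whitney inequality (a consequence of the geometric Brascamp--Lieb inequality): any decomposition $sI_k=\sum c_i\,\mathrm{Proj}_{F_i}$ forces $|L|^s\le\prod|\mathrm{Proj}_{F_i}L|^{c_i}$ for every compact $L$. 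With $L=K^n$ this gives $|K|^{n(n-1)/n}\le\prod_{\e,\sigma}|\mathrm{Proj}_{H_\theta^{\e,\sigma}}(K^n)|^{1/(2^n n!)}$, and since $K$ is symmetric all of these projections have the same volume $|\mathrm{Proj}_{H_\theta}(K^n)|$, finishing the proof. The missing idea in your attempt is precisely this symmetrisation-plus-Brascamp--Lieb step; a bare Fubini/Jacobian argument on a single coordinate block cannot see it.
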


The proof of Theorem \ref{thm:Lpsubspaces} in presented in Section \ref{sec:2} and the proof of Proposition \ref{prop:liako} is in Section \ref{sec:3}. Finally, we conclude with some additional remarks and open problems in Section \ref{sec:4}.

\subsection*{Acknowledgements} I would like to thank Franck Barthe, Apostolos Giannopoulos, Olivier Gu\'edon, Assaf Naor and the anonymous referees for constructive feedback on this work. I am also very grateful to Tomasz Tkocz for many helpful discussions.

%--------------------------------------------------------------------------------------------------
%--------------------------------------------------------------------------------------------------

\section{Proof of Theorem \ref{thm:Lpsubspaces}} \label{sec:2}

Before presenting the proof of Theorem \ref{thm:Lpsubspaces}, we make a simple observation which implies that the conclusion of the theorem is invariant under isometries of the space $X=(\R^m,\|\cdot\|_X)$.

\begin{lemma} \label{lem:invariance}
Fix $m,n\in\N$, $p\in(0,\infty]$ and let $X=(\R^m,\|\cdot\|_X)$ be a quasi-normed space. Then, for every invertible linear operator $T:\R^m\to\R^m$ and every unit vector $\theta$ in $\R^n$, we have
\begin{equation} \label{eq:invariance0}
|B_p^n(TX)\cap H_\theta| = \mr{det}(T)^{n-1} |B_p^n(X)\cap H_\theta|,
\end{equation}
where $TX=(\R^m,\|\cdot\|_{TX})$ and $\|y\|_{TX} = \|T^{-1}y\|_X$ for $y\in\R^m$. In particular, for every unit vectors $\theta,\phi$ in $\R^m$, we have
\begin{equation} \label{eq:invariance}
\frac{|B_p^n(TX)\cap H_\theta|}{|B_p^n(TX)\cap H_\phi|} = \frac{|B_p^n(X)\cap H_\theta|}{|B_p^n(X)\cap H_\phi|}.
\end{equation}
\end{lemma}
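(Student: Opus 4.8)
The plan is to reduce everything to a single change of variables in the $mn$-dimensional space $(\R^m)^n$. Let $T:\R^m\to\R^m$ be invertible and consider the block-diagonal operator $\widetilde{T} = T\oplus\cdots\oplus T$ acting on $(\R^m)^n$ by $\widetilde{T}(x_1,\ldots,x_n) = (Tx_1,\ldots,Tx_n)$. The first observation is that $\widetilde{T}$ maps $B_p^n(X)$ onto $B_p^n(TX)$: indeed, $\|(Tx_1,\ldots,Tx_n)\|_{\ell_p^n(TX)}^p = \sum_i \|T^{-1}(Tx_i)\|_X^p = \sum_i\|x_i\|_X^p = \|(x_1,\ldots,x_n)\|_{\ell_p^n(X)}^p$ (and the same identity with $p=\infty$ reading as a maximum), so $y\in B_p^n(TX)$ if and only if $\widetilde{T}^{-1}y\in B_p^n(X)$. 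The second observation is that $\widetilde{T}$ preserves the hyperplane $H_\theta$, i.e. $\widetilde{T}(H_\theta)=H_\theta$: if $\sum_i\theta_i x_i=0$ then $\sum_i\theta_i(Tx_i) = T\big(\sum_i\theta_i x_i\big)=0$, and conversely using $T^{-1}$. Hence $\widetilde{T}$ restricts to a linear isomorphism of the $m(n-1)$-dimensional subspace $H_\theta$ onto itself which carries $B_p^n(X)\cap H_\theta$ onto $B_p^n(TX)\cap H_\theta$.

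Next I would compute the Jacobian of this restriction. Choose a linear isometry (with respect to the induced Euclidean structures) $U:\R^{m(n-1)}\to H_\theta$; then $U^{-1}\widetilde{T}U$ is the matrix of the restriction $\widetilde{T}|_{H_\theta}$ in an orthonormal basis, and $|B_p^n(TX)\cap H_\theta| = |\mr{det}(U^{-1}\widetilde{T}U)|\cdot|B_p^n(X)\cap H_\theta|$, where $|\cdot|$ on the left and right is the $m(n-1)$-dimensional Lebesgue (= Hausdorff) measure on the respective affine subspaces. So it remains to show $|\mr{det}(\widetilde{T}|_{H_\theta})| = \mr{det}(T)^{n-1}$ (the statement as written presumes $\mr{det}(T)>0$, or one should read $\mr{det}(T)$ as $|\mr{det}(T)|$). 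The cleanest way is to complete $H_\theta$ by the orthogonal complement $H_\theta^{\perp} = \R\theta\otimes\R^m = \{(\theta_1 v,\ldots,\theta_n v): v\in\R^m\}$, which is $m$-dimensional and is also invariant under $\widetilde{T}$, with $\widetilde{T}$ acting on it (in the natural parametrization $v\mapsto(\theta_1 v,\ldots,\theta_n v)$) simply as $T$, hence with Jacobian $\mr{det}(T)$. Since $(\R^m)^n = H_\theta\oplus H_\theta^\perp$ orthogonally, the full Jacobian factorizes: $\mr{det}(\widetilde{T}) = \mr{det}(\widetilde{T}|_{H_\theta})\cdot\mr{det}(\widetilde{T}|_{H_\theta^\perp})$. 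On the other hand $\widetilde{T}$ is block-diagonal with $n$ copies of $T$, so $\mr{det}(\widetilde{T}) = \mr{det}(T)^n$. Comparing the two gives $\mr{det}(\widetilde{T}|_{H_\theta}) = \mr{det}(T)^{n}/\mr{det}(T) = \mr{det}(T)^{n-1}$, which is exactly \eqref{eq:invariance0}.

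Finally, \eqref{eq:invariance} follows immediately from \eqref{eq:invariance0}: writing it for the two unit vectors $\theta$ and $\phi$ and taking the quotient, the common factor $\mr{det}(T)^{n-1}$ cancels, yielding $\frac{|B_p^n(TX)\cap H_\theta|}{|B_p^n(TX)\cap H_\phi|} = \frac{\mr{det}(T)^{n-1}|B_p^n(X)\cap H_\theta|}{\mr{det}(T)^{n-1}|B_p^n(X)\cap H_\phi|} = \frac{|B_p^n(X)\cap H_\theta|}{|B_p^n(X)\cap H_\phi|}$. The only genuinely delicate point is bookkeeping the normalization of Lebesgue measure on the affine subspaces $H_\theta$, $H_\theta^\perp$ and on $(\R^m)^n$ so that the determinant factorization $\mr{det}(\widetilde{T}) = \mr{det}(\widetilde{T}|_{H_\theta})\mr{det}(\widetilde{T}|_{H_\theta^\perp})$ is literally valid; this is fine precisely because $H_\theta$ and $H_\theta^\perp$ are mutually orthogonal with respect to the standard inner product, so the volume form splits as a product and no extra constant appears. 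Everything is purely linear-algebraic, so the same argument works uniformly for all $p\in(0,\infty]$, the quasi-norm playing no role beyond the homogeneity used in the first observation.
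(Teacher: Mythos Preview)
Your argument is correct and follows essentially the same approach as the paper: both introduce the block-diagonal map $\widetilde{T}=I_n\otimes T$, observe that it preserves $H_\theta$ and carries $B_p^n(X)$ to $B_p^n(TX)$, and then compute $\det(\widetilde{T}|_{H_\theta})=\det(T)^{n-1}$. The only cosmetic difference is in this last step: the paper identifies $\widetilde{T}|_{H_\theta}=I_{\theta^\perp}\otimes T$ and invokes the tensor-product determinant formula directly, whereas you factor $\det(\widetilde{T})=\det(T)^n$ through the $\widetilde{T}$-invariant orthogonal splitting $H_\theta\oplus H_\theta^\perp$ and divide out $\det(\widetilde{T}|_{H_\theta^\perp})=\det(T)$.
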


\begin{proof}
The proof is a simple linear algebra exercise. Consider the natural mapping $\overline{T} = I_n\otimes T : \R^{mn}\to \R^{mn}$ given by $\overline{T}(x_1,\ldots,x_n) = (Tx_1,\ldots,Tx_n)$. Then, one can easily check that $\overline{T}H_\theta = H_\theta$ and furthermore, since $T(B_X)=B_{TX}$, that
\begin{equation}
B_p^n(TX) = \overline{T}B_p^n(X).
\end{equation}
Therefore,
\begin{equation} \label{eq:invar1}
|B_p^n(TX)\cap H_\theta|= |\overline{T}\big(B_p^n(X)\cap H_\theta\big)| =\mr{det}(\overline{T}|_{H_\theta}) |B_p^n(X)\cap H_\theta|.
\end{equation}
Since $\overline{T}|_{H_\theta} = I_{\theta^\perp} \otimes T$, we deduce that
\begin{equation} \label{eq:invar2}
\mr{det}(\overline{T}|_{H_\theta}) = \mr{det}(I_{\theta^\perp}\otimes T) = \mr{det}(T)^{n-1},
\end{equation}
which along with \eqref{eq:invar1} implies \eqref{eq:invariance0}.
\end{proof}

In convex geometric terminology, Lemma \ref{lem:invariance} implies that the conclusion of Theorem \ref{thm:Lpsubspaces} is independent of the {\it position} of the unit ball $B_X$ of $X=(\R^m,\|\cdot\|_X)$. The starting point of the proof of Theorem \ref{thm:Lpsubspaces} is a classical result due to Lewis \cite{Lew78}, according to which the unit ball of every finite dimensional subspace of $L_p$ can be put in a special position, which is called {\it Lewis' position}. Recall that a Borel measure $\mu$ on the unit sphere $S^{m-1}$ on $\R^m$ is called isotropic if
\begin{equation} \label{eq:isotro}
\|x\|_2 = \Big(\int_{S^{m-1}} \langle x,\theta\rangle^2 \diff\mu(\theta)\Big)^{1/2},
\end{equation}
for every $x\in\R^m$. The following theorem was proven by Lewis for $p\in[1,\infty)$ in \cite{Lew78} and extended to the whole range $p\in(0,\infty)$ by Schechtman and Zvavitch \cite{SZ01}.

\begin{theorem}[Lewis, Schechtman--Zvavitch] \label{thm:lewis}
Fix $m\in\N$ and $p\in(0,\infty)$. An $m$-dimensional quasi-normed space $X=(\R^m,\|\cdot\|_X)$ embeds isometrically into $L_p$ if and only if there exists an isotropic measure $\mu$ on the unit sphere $S^{m-1}$ and an invertible linear transformation $U\colon\R^m\to\R^m$ such that
\begin{equation} \label{eq:lewis}
\|Ux\|_X= \|x\|_{p,\mu} \eqdef \Big(\int_{S^{m-1}} |\langle x,\theta\rangle|^p \diff\mu(\theta)\Big)^{1/p},
\end{equation}
for every $x\in\R^m$.
\end{theorem}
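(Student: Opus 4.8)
The plan is to follow the classical John-type optimization argument (this is in essence how Lewis and, for the range $p<1$, Schechtman--Zvavitch proceed). The ``if'' direction is immediate: if $\|Ux\|_X=\|x\|_{p,\mu}$ for an invertible $U$ and a finite Borel measure $\mu$ on $S^{m-1}$, then $x\mapsto\langle U^{-1}x,\cdot\rangle$ is a linear isometry from $X$ onto a subspace of $L_p(S^{m-1},\mu)$, so $X$ embeds isometrically into $L_p$. For the ``only if'' direction, I would first reduce to an integral representation on the sphere: if $j\colon X\hookrightarrow L_p(\Omega,\mathbb{P})$ is an isometric embedding and $f_1,\dots,f_m\in L_p(\Omega,\mathbb{P})$ are the images of the coordinate vectors of $\R^m$, then $\|x\|_X^p=\int_\Omega|\langle x,F(\omega)\rangle|^p\diff\mathbb{P}(\omega)$ with $F(\omega)=(f_1(\omega),\dots,f_m(\omega))$. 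Discarding the $\mathbb{P}$-null set $\{F=0\}$ and pushing the measure $\|F(\omega)\|_2^p\diff\mathbb{P}(\omega)$ forward under $\omega\mapsto F(\omega)/\|F(\omega)\|_2$ produces a finite Borel measure $\nu$ on $S^{m-1}$ with $\|x\|_X=\|x\|_{p,\nu}$ for every $x\in\R^m$. Since $\|\cdot\|_X$ is a genuine quasi-norm, no unit vector $e$ can satisfy $\langle e,\theta\rangle=0$ for $\nu$-almost every $\theta$; equivalently, the linear span of $\supp\nu$ is all of $\R^m$.

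Next I would set up the variational problem: maximize $\det U$ over $U\in\mathrm{GL}_m(\R)$ subject to the constraint $\int_{S^{m-1}}\|U^\ast\theta\|_2^p\diff\nu(\theta)\le m$. The functional $U\mapsto\int_{S^{m-1}}\|U^\ast\theta\|_2^p\diff\nu(\theta)$ is continuous on all $m\times m$ matrices (dominated convergence, $\nu$ being finite), so the constraint set is closed; it is also bounded, because if $\|U_k\|\to\infty$ then, passing to a subsequence along which the top unit eigenvectors $e_k$ of $U_kU_k^\ast$ converge to a unit vector $e$, one has $\int_{S^{m-1}}\|U_k^\ast\theta\|_2^p\diff\nu(\theta)\ge\|U_k\|^p\int_{S^{m-1}}|\langle\theta,e_k\rangle|^p\diff\nu(\theta)\to\infty$, since $\int_{S^{m-1}}|\langle\theta,e\rangle|^p\diff\nu(\theta)>0$ by the spanning property of $\supp\nu$. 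Hence the constraint set is compact; $\det$ is continuous on it and its maximum is strictly positive (a small dilation of $I_m$ is admissible), so it is attained at some invertible $U$, and the constraint is active there (otherwise a dilation $sU$ with $s>1$ close to $1$ is still admissible and has larger determinant).

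At this maximizer, the Lagrange condition for $\log\det$ against the constraint surface (which is smooth near $U$, its gradient being nonzero by Euler's identity) reads $U^{-\ast}=\lambda p\int_{S^{m-1}}\|U^\ast\theta\|_2^{p-2}\,\theta\theta^\ast U\diff\nu(\theta)$ for some multiplier $\lambda>0$. Multiplying on the left by $U^\ast$ gives
$$\int_{S^{m-1}}\|U^\ast\theta\|_2^{p-2}\,(U^\ast\theta)(U^\ast\theta)^\ast\diff\nu(\theta)=\tfrac{1}{\lambda p}\,I_m,$$
and taking traces, using that the constraint is active, forces $\lambda p=1$. Let $\mu$ be the pushforward of the measure $\|U^\ast\theta\|_2^p\diff\nu(\theta)$ under $\theta\mapsto\sigma(\theta)\eqdef U^\ast\theta/\|U^\ast\theta\|_2$. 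The displayed identity is exactly $\int_{S^{m-1}}\sigma\sigma^\ast\diff\mu(\sigma)=I_m$, i.e.\ $\mu$ is isotropic in the sense of \eqref{eq:isotro}. Finally, for every $x\in\R^m$,
$$\|Ux\|_X^p=\int_{S^{m-1}}|\langle x,U^\ast\theta\rangle|^p\diff\nu(\theta)=\int_{S^{m-1}}|\langle x,\sigma(\theta)\rangle|^p\|U^\ast\theta\|_2^p\diff\nu(\theta)=\int_{S^{m-1}}|\langle x,\sigma\rangle|^p\diff\mu(\sigma)=\|x\|_{p,\mu}^p,$$
which is \eqref{eq:lewis}.

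The crux of the argument is the compactness step, and it genuinely uses the non-degeneracy of $\|\cdot\|_X$ (the spanning of $\supp\nu$): without it $\det U$ could be unbounded along the constraint and the scheme collapses. The differentiation of the integral constraint is routine since for $U$ in a neighborhood of a fixed invertible matrix the integrands are uniformly bounded and $\nu$ is finite, and one only needs to note that the multiplier is nonzero — which holds since $\det$ strictly increases under dilation, equivalently $\langle U^{-\ast},U\rangle=m\neq 0$ — so that the first-order identity is not vacuous. For the quasi-norm range $p\in(0,1)$ treated by Schechtman--Zvavitch, $\|\cdot\|_{p,\nu}$ is merely a quasi-norm, but nothing above uses the triangle inequality, so the argument goes through verbatim (the dominated convergence in the compactness step still applies, the relevant integrands being bounded by $1$).
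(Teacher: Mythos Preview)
The paper does not supply a proof of this theorem: it is stated as a known result, attributed to Lewis \cite{Lew78} for $p\ge 1$ and to Schechtman--Zvavitch \cite{SZ01} for $p\in(0,1)$, with the formulation taken from \cite{LYZ05}, and then used as a black box. Your sketch is a faithful rendition of precisely that classical approach---the John-type optimization (maximize $\det U$ under the $p$-th moment constraint) followed by reading the Lagrange condition as isotropy of the pushforward measure---so there is nothing to compare on the level of strategy.

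As for correctness, the outline is sound: the reduction to a spherical representation, the compactness of the constraint set via the spanning property of $\supp\nu$, the first-order condition, and the identification of $\mu$ all check out. One small wording point: when you bound $\|U_k^\ast\theta\|_2\ge \|U_k\|\,|\langle\theta,e_k\rangle|$, the vector $e_k$ you need is a top \emph{left} singular vector of $U_k$ (equivalently a unit eigenvector of $U_kU_k^\ast$ for the largest eigenvalue), which is exactly what you wrote; just be sure the reader sees why $\int|\langle\theta,e\rangle|^p\diff\nu(\theta)>0$, namely because it equals $\|e\|_X^p$ and $\|\cdot\|_X$ is nondegenerate.
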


This formulation of Lewis' theorem is taken from \cite{LYZ05}. According to Lemma \ref{lem:invariance}, for the rest of the proof of Theorem \ref{thm:Lpsubspaces}, we can freely assume that the subspace of $L_p$ at hand is $X = (\R^m,\|\cdot\|_{p,\mu})$, in which case we will write $X= X_p(\mu)$ and say that $B_X$ is in Lewis' position. We refer to the survey \cite{JS01} for an in-depth account of finite dimensional subspaces of $L_p$ including Lewis' theorem.

As first observed by Barthe, Gu\'edon, Mendelson and Naor in \cite{BGMN05}, inequalities about volumes of sections such as \eqref{eq:Lpsubspacesconsequence} can be formally obtained by the comparison of Laplace transforms of suitable Gaussian random vectors. In our case, we will prove the following stronger theorem.

\begin{theorem} \label{thm:Laplace}
Fix $m,n\in\N$, $p\in(0,2]$ and let $X=X_p(\mu)$ be an $m$-dimensional subspace of $L_p$ such that $B_X$ is in Lewis' position. For a unit vector $a$ in $\R^n$, let $G_a$ be a standard Gaussian random vector on the subspace $H_a$ of $\R^{mn}$. Then for every $\lambda\in(0,\infty)$ and unit vectors $\theta=(\theta_1,\ldots,\theta_n)$ and $\phi=(\phi_1,\ldots,\phi_n)$ in $\R^n$, we have
\begin{equation} \label{eq:Laplace}
(\theta_1^2,\ldots,\theta_n^2) \preceq (\phi_1^2,\ldots,\phi_n^2) \ \ \ \Longrightarrow \ \ \ \mb{E} e^{-\lambda \|G_\theta\|^p_{\ell_p^n(X)\cap H_\theta}} \leq \mb{E} e^{-\lambda \|G_\phi\|^p_{\ell_p^n(X)\cap H_\phi}}.
\end{equation}
\end{theorem}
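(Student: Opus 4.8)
The plan is to carry out the probabilistic scheme of \cite{BGMN05} and \cite{ENT16} — where the case $X=\R$ is treated — but with the scalar $p$-stable variables used there replaced by the $\R^m$-valued $p$-stable random vector naturally attached to the $L_p$-subspace $X_p(\mu)$, and then to deduce \eqref{eq:Laplace} from a convexity property of the logarithm of a determinant. Throughout write $c=\lambda^{2/p}$, let $\Gamma$ denote a standard Gaussian vector on $\R^{mn}=(\R^m)^n$ and $P_{H_a}$ the orthogonal projection of $\R^{mn}$ onto $H_a$, so that $G_a$ has the distribution of $P_{H_a}\Gamma$. A direct computation gives, for $\mathbf v=(v_1,\dots,v_n)\in(\R^m)^n$,
\begin{equation*}
\|P_{H_\theta}\mathbf v\|_2^2=\sum_{j=1}^n\|v_j\|_2^2-\Big\|\sum_{j=1}^n\theta_jv_j\Big\|_2^2\le\sum_{j=1}^n\|v_j\|_2^2,
\end{equation*}
the last inequality being Cauchy--Schwarz.

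First I would pass to the $p$-stable picture. Since $B_X$ is in Lewis' position, $\|x\|_X^p=\int_{S^{m-1}}|\langle x,\omega\rangle|^p\diff\mu(\omega)$ for $x\in\R^m$, so there is an $\R^m$-valued symmetric $p$-stable random vector $U_\mu$ with $\E\,e^{\mathrm{i}\langle z,U_\mu\rangle}=e^{-\|z\|_X^p}$ for every $z\in\R^m$ (this is the probabilistic shadow of the embedding $X_p(\mu)\hookrightarrow L_p$). Let $u_1,\dots,u_n$ be i.i.d.\ copies of $U_\mu$, independent of $G_\theta$. Since $e^{-\lambda\|y\|_X^p}=\E\,e^{\mathrm{i}\lambda^{1/p}\langle y,U_\mu\rangle}$ for $y\in\R^m$, multiplying this identity over the $n$ blocks of $G_\theta$ and then integrating out $G_\theta$ via $\E\,e^{\mathrm{i}\langle G_\theta,z\rangle}=e^{-\frac12\|P_{H_\theta}z\|_2^2}$ (all integrands have modulus $\le1$, so the interchanges of expectation are harmless) yields the identity
\begin{equation*}
\E\,e^{-\lambda\|G_\theta\|^p_{\ell_p^n(X)\cap H_\theta}}=\E\Big[e^{-\frac c2\|P_{H_\theta}\mathbf u\|_2^2}\Big],\qquad\mathbf u=(u_1,\dots,u_n).
\end{equation*}

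The crucial step is then to use that an $\R^m$-valued symmetric $p$-stable random vector is a Gaussian variance mixture: $U_\mu\simd\Sigma^{1/2}\mathbf N$ with $\mathbf N\sim N(0,I_m)$ and $\Sigma$ a random positive semidefinite $m\times m$ matrix independent of $\mathbf N$. (For $p<2$ this follows from the classical subordination $V\simd\sqrt{2A}\,N$ expressing a scalar symmetric $p$-stable variable as a Gaussian scaled by an independent positive $(p/2)$-stable variable — applied coordinatewise to $U_\mu=\sum_kc_k^{1/p}\omega_kV_k$ when $\mu$ is finitely supported, and extended to a general $\mu$ by weak-$\ast$ approximation together with dominated convergence; for $p=2$ the matrix $\Sigma=2I_m$ is deterministic.) Thus $\mathbf u\simd(\Sigma_1^{1/2}\mathbf N_1,\dots,\Sigma_n^{1/2}\mathbf N_n)$ with $\Sigma_1,\dots,\Sigma_n$ i.i.d.\ copies of $\Sigma$ and $\mathbf N_1,\dots,\mathbf N_n$ i.i.d.\ standard Gaussian, all independent. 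Conditioning on $(\Sigma_j)_j$, the vector $\mathbf u$ is centered Gaussian on $\R^{mn}$ with block-diagonal covariance $\mathcal S=\mathrm{diag}(\Sigma_1,\dots,\Sigma_n)$, so $\E[e^{-\frac c2\langle P_{H_\theta}\mathbf u,\mathbf u\rangle}\mid(\Sigma_j)_j]$ is the Gaussian integral $\det(I+cP_{H_\theta}\mathcal S)^{-1/2}$. Writing $P_{H_\theta}=I-RR^\top$ for the block isometry $R$ whose $j$-th block is $\theta_jI_m$, the matrix determinant lemma together with the identities $c\Sigma_j(I_m+c\Sigma_j)^{-1}=I_m-M_j$, where $M_j:=(I_m+c\Sigma_j)^{-1}$, and $\sum_j\theta_j^2=1$ reduces this to
\begin{equation*}
\E\,e^{-\lambda\|G_\theta\|^p_{\ell_p^n(X)\cap H_\theta}}=\E\left[\frac{\prod_{j=1}^n(\det M_j)^{1/2}}{\det\big(\sum_{j=1}^n\theta_j^2M_j\big)^{1/2}}\right],
\end{equation*}
where $M_1,\dots,M_n$ are i.i.d.\ copies of the random positive definite matrix $(I_m+c\Sigma)^{-1}$ (positive definite because $I_m+c\Sigma\succeq I_m$).

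The conclusion \eqref{eq:Laplace} would then be read off from this formula. For fixed positive definite $M_1,\dots,M_n$ the map $x\mapsto\log\det(\sum_jx_jM_j)$ is concave on $[0,\infty)^n$, being $\log\det$ composed with a linear map; hence $x\mapsto\big(\prod_j(\det M_j)^{1/2}\big)\exp\!\big(-\tfrac12\log\det(\sum_jx_jM_j)\big)$ is convex, i.e.\ the integrand above is a convex function of $(\theta_1^2,\dots,\theta_n^2)$. Taking expectation over the i.i.d.\ matrices $M_1,\dots,M_n$ preserves convexity and, since the $M_j$ are identically distributed, produces a function of $(\theta_1^2,\dots,\theta_n^2)$ that is invariant under permutations of its arguments; a convex permutation-invariant function on the simplex $\{x\ge0:\sum_jx_j=1\}$ is Schur convex (by the Hardy--Littlewood--P\'olya description of majorization), which is exactly \eqref{eq:Laplace} since $\theta,\phi$ are unit vectors. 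The main obstacle is the middle step: producing the exact conditionally-Gaussian representation of the $L_p$-stable vector $U_\mu$ and pushing it through the determinant computation — this is precisely where the hypothesis that $X$ embeds isometrically into $L_p$, through Lewis' theorem and the spectral measure $\mu$, is used.
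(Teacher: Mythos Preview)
Your argument is correct and reaches the same endpoint as the paper --- an identity of the form
\[
\mb{E}\,e^{-\lambda\|G_\theta\|_{\ell_p^n(X)\cap H_\theta}^p}=\mb{E}\Bigg[\frac{\prod_{j=1}^n(\det M_j)^{1/2}}{\det\big(\sum_{j=1}^n\theta_j^2 M_j\big)^{1/2}}\Bigg]
\]
for i.i.d.\ positive definite random matrices $M_j$, followed by Schur convexity via log-concavity of the determinant together with permutation invariance (the paper packages this last step as Lemma~\ref{lem:det}, invoking Marshall--Proschan; your Hardy--Littlewood--P\'olya formulation is the same thing). The route to the identity, however, is genuinely different. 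The paper first rewrites the Laplace transform as a limit of slab measures (Lemma~\ref{lem:identity1}, imported from \cite{BGMN05}), then uses Bernstein's theorem on the completely monotonic function $t\mapsto e^{-c_j(\lambda t^{p/2}+t/2)}$ to produce the Gaussian mixture (Lemma~\ref{lem:identity2}), and only after a further limiting computation involving $\gamma_m(\e L)/\e^m$ arrives at the determinant formula. You bypass the slab machinery entirely: the Fourier identity $e^{-\lambda\|y\|_X^p}=\mb{E}e^{\mathrm i\lambda^{1/p}\langle y,U_\mu\rangle}$ followed by Gaussian subordination of the $p$-stable vector $U_\mu$ gives the mixture directly, and a single Sylvester/matrix-determinant-lemma computation with $P_{H_\theta}=I-RR^\top$ yields the formula. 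Your mixing matrices $M_j=(I_m+c\Sigma_j)^{-1}$ are not literally the paper's $M_i(s)=\tfrac12(\sum_j s_{ij}u_j\otimes u_j)^{-1}$, but both are valid i.i.d.\ representations of the same quantity. Your approach is shorter and more transparent; the paper's Bernstein-based approach has the mild advantage that it does not rely on the specific probabilistic structure of $p$-stable laws and could in principle accommodate other completely monotonic weights. Both need the discretisation of $\mu$ and the weak-$\ast$ approximation at the end.
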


Using Theorem \ref{thm:Laplace}, one can additionally derive the following inequality.

\begin{corollary} \label{cor:meanwidth}
Fix $m,n\in\N$, $q\in[2,\infty]$ and let $X=X_p(\mu)$ be an $m$-dimensional subspace of $L_p$, where $\frac{1}{p}+\frac{1}{q}=1$, whose unit ball $B_X$ is in Lewis' position. Then for every unit vectors $\theta=(\theta_1,\ldots,\theta_n)$ and $\phi=(\phi_1,\ldots,\phi_n)$ in $\R^n$, we have
\begin{equation}
(\theta_1^2,\ldots,\theta_n^2) \preceq (\phi_1^2,\ldots,\phi_n^2) \ \ \ \Longrightarrow \ \ \ w\big(\mr{Proj}_{H_\theta}(B_q^n(X^\ast))\big) \geq w\big(\mr{Proj}_{H_\phi}(B_q^n(X^\ast))\big),
\end{equation}
where $w(A)$ is the mean width of the set $A$.
\end{corollary}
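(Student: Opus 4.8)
The plan is to translate the mean-width inequality into the Laplace-transform comparison of Theorem~\ref{thm:Laplace} via two standard dictionary entries: mean width is, up to a dimensional constant, a first moment of a gauge against a Gaussian vector, and fractional moments of a nonnegative random variable are monotone in the Laplace transform.

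\emph{Reduction to a Gaussian moment.} Since $q\in[2,\infty]$ forces $p\in[1,2]$, the space $X=X_p(\mu)$ is genuinely normed with $X^{\ast\ast}=X$ and $\ell_p^n(X)^\ast=\ell_q^n(X^\ast)$, so $B_q^n(X^\ast)$ is exactly the polar body $\big(B_p^n(X)\big)^\circ$ in $\R^{mn}$. As polarity interchanges sections with projections, for every unit vector $\theta\in\R^n$ one has $\mr{Proj}_{H_\theta}\big(B_q^n(X^\ast)\big)=\big(B_p^n(X)\cap H_\theta\big)^\circ$, the polar now taken inside the $m(n-1)$-dimensional space $H_\theta$. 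For a symmetric convex body $C$ in a $d$-dimensional Euclidean space, $h_{C^\circ}=\|\cdot\|_C$ (the Minkowski gauge of $C$), whence $w(C^\circ)=c_d\,\mb{E}\,\|g\|_C$ for a standard Gaussian vector $g$ and a constant $c_d>0$ depending only on $d$. Taking $C=B_p^n(X)\cap H_\theta$, whose gauge is the restriction of $\|\cdot\|_{\ell_p^n(X)}$, this gives
$$w\big(\mr{Proj}_{H_\theta}(B_q^n(X^\ast))\big)=c_{m(n-1)}\,\mb{E}\,\|G_\theta\|_{\ell_p^n(X)\cap H_\theta},$$
where $G_\theta$ is a standard Gaussian on $H_\theta$; the constant is the same for $\theta$ and $\phi$, and the expectation is finite because the norm is dominated by a multiple of $\|\cdot\|_2$. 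Hence it suffices to prove that $(\theta_1^2,\ldots,\theta_n^2)\preceq(\phi_1^2,\ldots,\phi_n^2)$ implies $\mb{E}\,\|G_\theta\|_{\ell_p^n(X)\cap H_\theta}\geq \mb{E}\,\|G_\phi\|_{\ell_p^n(X)\cap H_\phi}$.

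\emph{From Laplace transforms to moments.} I will use the elementary representation
$$\mb{E}\,Z^s=\frac{s}{\Gamma(1-s)}\int_0^\infty\frac{1-\mb{E}\,e^{-\lambda Z}}{\lambda^{1+s}}\ud\lambda\qquad(0<s<1),$$
valid for any nonnegative random variable $Z$ with $\mb{E}\,Z<\infty$ (it follows from $\int_0^\infty\lambda^{-1-s}(1-e^{-\lambda z})\ud\lambda=z^s\Gamma(1-s)/s$ and Tonelli). For $p\in(1,2]$, apply this with $Z=\|G_\theta\|^p_{\ell_p^n(X)\cap H_\theta}$ and $s=1/p\in(0,1)$: by \eqref{eq:Laplace} the integrand for $\theta$ dominates that for $\phi$ pointwise in $\lambda$, so $\mb{E}\,\|G_\theta\|_{\ell_p^n(X)\cap H_\theta}\geq\mb{E}\,\|G_\phi\|_{\ell_p^n(X)\cap H_\phi}$. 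For $p=1$ (the case $q=\infty$), the exponent $1/p=1$ lies at the excluded endpoint, so instead apply the representation with $Z=\|G_\theta\|_{\ell_1^n(X)\cap H_\theta}$ and an arbitrary $s\in(0,1)$, obtaining $\mb{E}\,\|G_\theta\|^s_{\ell_1^n(X)\cap H_\theta}\geq\mb{E}\,\|G_\phi\|^s_{\ell_1^n(X)\cap H_\phi}$ for all such $s$ from \eqref{eq:Laplace} with $p=1$, and then let $s\uparrow1$ (dominated convergence, all moments being finite). Combined with the previous step, this proves the corollary.

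\emph{Main obstacle.} Granting Theorem~\ref{thm:Laplace}, the argument is largely formal; the only genuinely delicate points are the polarity identity $\mr{Proj}_{H_\theta}(B_q^n(X^\ast))=(B_p^n(X)\cap H_\theta)^\circ$ (and the attendant identification of $B_q^n(X^\ast)$ with $(B_p^n(X))^\circ$ via duality of $\ell_p^n(X)$) and the boundary case $p=1$, where the natural exponent $1/p$ coincides with the endpoint of the interval on which the moment representation is available, forcing the passage to the limit $s\uparrow1$.
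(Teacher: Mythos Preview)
Your argument is correct and follows essentially the same route as the paper: reduce via the polarity relation $\mr{Proj}_{H_\theta}\big(B_q^n(X^\ast)\big)=(B_p^n(X)\cap H_\theta)^\circ$ to a comparison of Gaussian first moments, then extract that comparison from the Laplace-transform inequality of Theorem~\ref{thm:Laplace}. The only cosmetic difference is in this last extraction: the paper invokes Bernstein's theorem with the completely monotonic function $h(t)=e^{-ct^{1/p}}$ (valid for $1/p\in(0,1]$) and then differentiates at $c=0$, which covers all $p\in[1,2]$ uniformly, whereas your fractional-moment integral representation requires the separate $s\uparrow 1$ limit at $p=1$.
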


The derivation of volume and mean width inequalities from the comparison of Laplace transforms was explained in full detail in \cite[Section~6]{ENT16} in the scalar case $X=\R$. The necessary modifications for the vector-valued case treated here are only symbolic, yet we include them for the readers' convenience.

\medskip

\noindent {\it Theorem \ref{thm:Laplace} implies Theorem \ref{thm:Lpsubspaces}.} Let $\theta, \phi\in S^{n-1}$ such that $(\theta_1^2,\ldots,\theta_n^2) \preceq (\phi_1^2,\ldots,\phi_n^2)$ and $X=(\R^m,\|\cdot\|_X)$ an $m$-dimensional subspace of $L_p$. By Lemma \ref{lem:invariance}, the conclusion of Theorem \ref{thm:Lpsubspaces} is invariant under isometries and thus, by Theorem \ref{thm:lewis}, we can assume that $B_X$ is in Lewis' position. Then, by Theorem \ref{thm:Laplace}, for every $\lambda\in(0,\infty)$ we have
\begin{equation} \label{eq:needtoint}
\mb{E} e^{-\lambda \|G_\theta\|^p_{\ell_p^n(X)\cap H_\theta}} \leq \mb{E} e^{-\lambda \|G_\phi\|^p_{\ell_p^n(X)\cap H_\phi}}.
\end{equation}
Recall that an infinitely differentiable function $h:(0,\infty)\to\R_+$ is called completely monotonic if $(-1)^nh^{(n)}(t)\geq0$ for every $t\in(0,\infty)$ and every $n\in\N$. A classical theorem of Bernstein (see, e.g., \cite{Fel71}) characterizes completely monotonic functions as those who can be expressed as the Laplace transform of a Borel measure on $(0,\infty)$. Integrating \eqref{eq:needtoint} with respect to an arbitrary Borel measure $\mu$ on $(0,\infty)$, we deduce that
\begin{equation}
\mb{E} \int_0^\infty e^{-\lambda \|G_\theta\|^p_{\ell_p^n(X)\cap H_\theta}}\diff\mu(\lambda) \leq \mb{E} \int_0^\infty e^{-\lambda \|G_\phi\|^p_{\ell_p^n(X)\cap H_\phi}} \diff\mu(\lambda),
\end{equation}
which, by Bernstein's theorem, is equivalent to the validity of the inequality
\begin{equation} \label{eq:completelymonotonic}
\mb{E}h\big(\|G_\theta\|^p_{\ell_p^n(X)\cap H_\theta}\big) \leq \mb{E}h\big(\|G_\phi\|^p_{\ell_p^n(X)\cap H_\phi}\big)
\end{equation}
for every completely monotonic function $h:(0,\infty)\to\R$. A straightforward computation shows that the function $h(t)=t^{-\alpha/p}$ is completely monotonic for $\alpha>0$ and thus \eqref{eq:completelymonotonic} implies that
\begin{equation}
\mb{E}\|G_\theta\|^{-\alpha}_{\ell_p^n(X)\cap H_\theta} \leq \mb{E} \|G_\phi\|^{-\alpha}_{\ell_p^n(X)\cap H_\phi}
\end{equation}
for every $\alpha\in\big(0,m(n-1)\big)$ so that both expectations converge. Integration in polar coordinates now shows that
\begin{equation}
\int_{S(H_\theta)} \|u\|^{-\alpha}_{\ell_p^n(X)\cap H_\theta} \diff \sigma_{H_\theta}(u) \leq \int_{S(H_\phi)} \|u\|^{-\alpha}_{\ell_p^n(X)\cap H_\phi} \diff \sigma_{H_\phi}(u)
\end{equation}
for every $\alpha\in\big(0,m(n-1)\big)$, where $\sigma_{H_\theta}$ and $\sigma_{H_\phi}$ are the rotationally invariant probability measures on the unit spheres of $H_\theta$ and $H_\phi$ respectively. Letting $\alpha\to m(n-1)$ and using the identity
\begin{equation}
\int_{S^{\ell-1}} \|u\|_K^{-\ell} \diff\sigma(u) = \frac{|K|}{|B_2^\ell|},
\end{equation}
which holds true for every convex body $K\subseteq\R^\ell$, we finally deduce that
\begin{equation}
\big| B_{p}^n(X) \cap H_\theta\big| \leq \big|B_p^n(X) \cap H_\phi\big|.
\end{equation}
This completes the proof of Theorem \ref{thm:Lpsubspaces}. \hfill$\Box$

\medskip

\noindent {\it Theorem \ref{thm:Laplace} implies Corollary \ref{cor:meanwidth}.} Let $\theta, \phi\in S^{n-1}$ such that $(\theta_1^2,\ldots,\theta_n^2) \preceq (\phi_1^2,\ldots,\phi_n^2)$ and $X=(\R^m,\|\cdot\|_X)$ an $m$-dimensional subspace of $L_p$ whose unit ball $B_X$ is in Lewis' position. A simple inductive argument shows that for every $\beta\in(0,1]$ and $c\in(0,\infty)$, the function $h(t) = e^{-ct^\beta}$ is completely monotonic. Thus, \eqref{eq:completelymonotonic} imples that
\begin{equation} \label{eq:lasteqfornow}
\mb{E} e^{-c\|G_\theta\|^{\beta p}_{\ell_p^n(X)\cap H_\theta}} \leq \mb{E} e^{-c\|G_\phi\|^{\beta p}_{\ell_p^n(X)\cap H_\phi}}.
\end{equation}
Since both sides of \eqref{eq:lasteqfornow}, as functions of $c\in(0,\infty)$, are equal at $c=0$ we deduce that their derivatives at $c=0$ compare in the same way, that is
\begin{equation}
\mb{E} \|G_\theta\|^{\beta p}_{\ell_p^n(X)\cap H_\theta}\geq\mb{E} \|G_\phi\|^{\beta p}_{\ell_p^n(X)\cap H_\phi},
\end{equation} \label{eq:mwcompfinal}
for every $\beta\in(0,1]$. Choosing $\beta=1/p$ and integrating in polar coordinates yields
\begin{equation} \label{eq:mwcompfinal2}
\int_{S(H_\theta)} \|u\|_{\ell_p^n(X)\cap H_\theta}\diff\sigma_{H_\theta}(u) \geq \int_{S(H_\phi)} \|u\|_{\ell_p^n(X)\cap H_\phi} \diff\sigma_{H_\phi}(u).
\end{equation}
Recall that if $E=(\R^\ell,\|\cdot\|_E)$ is an $\ell$-dimensional normed space with unit ball $K$, then the mean width of the polar body $K^\circ$ is given by 
\begin{equation} \label{eq:defmw}
w(K^\circ) = \int_{S^{\ell-1}} \|u\|_{E} \diff\sigma(u).
\end{equation}
Combining \eqref{eq:defmw} with the classical polarity relation 
\begin{equation}
\big(B_p^n(X)\cap H_\theta\big)^\circ = \mathrm{Proj}_{H_\theta}\big(B_q^n(X^\ast)\big)
\end{equation}
we can rewrite \eqref{eq:mwcompfinal2} as
\begin{equation}
w\big(\mr{Proj}_{H_\theta}(B_q^n(X^\ast))\big) \geq w\big(\mr{Proj}_{H_\phi}(B_q^n(X^\ast))\big),
\end{equation}
which completes the proof of Corollary \ref{cor:meanwidth}.
\hfill$\Box$

\medskip

We now proceed with the proof of Theorem \ref{thm:Laplace}. Let $X=X_p(\mu)$ be a subspace of $L_p$, $p\in(0,2)$, whose unit ball $B_X$ is in Lewis' position. We will assume that $\mu$ is finitely supported, that is, there exist $M\in\N$, $u_1,\ldots,u_M\in S^{m-1}$ and $c_1,\ldots,c_M\in(0,\infty)$ such that $\mu = \sum_{j=1}^M c_j \delta_{u_j}$. After proving Theorem \ref{thm:Laplace} for $X=X_{p}(\mu)$ corresponding to finitely supported measures $\mu$, the general case will follow by a standard approximation argument. Notice that $\mu$ is full dimensional by the isotropicity assumption \eqref{eq:isotro}, therefore the vectors $u_1,\ldots,u_M$ span $\R^m$. 

For a block hyperplane $H_\theta$ as in \eqref{eq:Htheta} and $\e\in(0,1)$, denote
\begin{equation} \label{eq:Heps}
H_\theta(\e) = \Big\{ x=(x_1,\ldots,x_n) \in \big(\R^m\big)^n: \ \Big\|\sum_{i=1}^n \theta_i x_i\Big\|_\infty <\frac{\e}{2}\Big\}.
\end{equation}
We will make use of an identity which is a formal consequence of \cite[Lemma~14]{BGMN05}.

\begin{lemma} \label{lem:identity1}
Fix $m,n\in\N$, $p\in(0,\infty)$ and let $X=(\R^m,\|\cdot\|_X)$ be an $m$-dimensional quasi-normed space. Then for every $\lambda\in(0,\infty)$ and every unit vector $\theta$ in $\R^n$,
\begin{equation} \label{eq:identity1}
\mb{E} e^{-\lambda\|G_\theta\|^p_{\ell_{p}^{n}(X)\cap H_\theta}} =  \lim_{\e\to0^+} \frac{(2\pi)^{-\frac{m(n-1)}{2}}}{\e^m} \mu_{p,\lambda,X}^{n}\big(H_\theta(\e)\big),
\end{equation}
where
\begin{equation} \label{eq:mu}
\diff\mu_{p,\lambda,X}^n(x_1,\ldots,x_n) = e^{-\sum_{i=1}^n( \lambda \|x_i\|_X^p+\frac{1}{2}\|x_i\|_2^2)}\diff x_1 \cdots\diff x_n,
\end{equation}
and each $\diff x_i$ is Lebesgue measure on $\R^m$.
\end{lemma}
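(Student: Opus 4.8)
The plan is to disintegrate the finite measure $\mu_{p,\lambda,X}^n$ along the linear surjection $q\colon(\R^m)^n\to\R^m$ given by $q(x_1,\dots,x_n)=\sum_{i=1}^n\theta_ix_i$, whose kernel is $\ker q=H_\theta$ and which satisfies $H_\theta(\e)=q^{-1}\big(\{v\in\R^m:\ \|v\|_\infty<\e/2\}\big)$. Since $\theta$ is a unit vector, $H_\theta^\perp=\{(\theta_1v,\dots,\theta_nv):\ v\in\R^m\}$ and the restriction $q|_{H_\theta^\perp}$ is a linear isometry onto $\R^m$; writing $x=x'+x''$ with $x'\in H_\theta$ and $x''=(\theta_1v,\dots,\theta_nv)\in H_\theta^\perp$, where $v=q(x)$, orthogonality gives $\|x\|_2^2=\|x'\|_2^2+\|v\|_2^2$. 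Applying Fubini's theorem to the orthogonal splitting $\R^{mn}=H_\theta\oplus H_\theta^\perp$, with $H_\theta^\perp$ identified with $\R^m$ via $v$, then yields
\begin{equation*}
\mu_{p,\lambda,X}^n\big(H_\theta(\e)\big)=\int_{\|v\|_\infty<\e/2}e^{-\frac12\|v\|_2^2}\,F(v)\diff v,\qquad F(v)\eqdef\int_{H_\theta}e^{-\lambda\sum_{i=1}^n\|x_i'+\theta_iv\|_X^p-\frac12\|x'\|_2^2}\diff\mathcal H_{H_\theta}(x'),
\end{equation*}
where $x'=(x_1',\dots,x_n')$ and $\mathcal H_{H_\theta}$ is Lebesgue (Hausdorff) measure on the $m(n-1)$-dimensional subspace $H_\theta$.

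Next I would divide by $\e^m$ and let $\e\to0^+$. The cube $\{v\in\R^m:\|v\|_\infty<\e/2\}$ has volume $\e^m$, so it suffices to show that $v\mapsto e^{-\frac12\|v\|_2^2}F(v)$ is continuous at $v=0$; this gives $\e^{-m}\mu_{p,\lambda,X}^n(H_\theta(\e))\to F(0)$. The exponential prefactor is plainly continuous, and continuity of $F$ follows from dominated convergence: since $\|\cdot\|_X$ is continuous on $\R^m$ (a quasi-norm on a finite-dimensional space is continuous), the integrand of $F(v)$ converges pointwise in $x'$ as $v\to0$, and, because $\lambda\sum_i\|x_i'+\theta_iv\|_X^p\ge0$, it is dominated uniformly in $v$ by the $\mathcal H_{H_\theta}$-integrable function $e^{-\frac12\|x'\|_2^2}$. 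Hence
\begin{equation*}
\lim_{\e\to0^+}\frac1{\e^m}\,\mu_{p,\lambda,X}^n\big(H_\theta(\e)\big)=F(0)=\int_{H_\theta}e^{-\lambda\sum_{i=1}^n\|x_i'\|_X^p}\,e^{-\frac12\|x'\|_2^2}\diff\mathcal H_{H_\theta}(x').
\end{equation*}

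It remains to recognise the right-hand side as a Gaussian average. A standard Gaussian vector $G_\theta$ on the $m(n-1)$-dimensional subspace $H_\theta$ has distribution $(2\pi)^{-\frac{m(n-1)}{2}}e^{-\frac12\|x'\|_2^2}\diff\mathcal H_{H_\theta}(x')$, and for $x'=(x_1',\dots,x_n')\in H_\theta$ the gauge of the body $B_p^n(X)\cap H_\theta$ inside $H_\theta$ coincides with $\|x'\|_{\ell_p^n(X)}$ (because $x'/t\in H_\theta$ for every $t>0$), so $\|x'\|_{\ell_p^n(X)\cap H_\theta}^p=\sum_{i=1}^n\|x_i'\|_X^p$. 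Therefore $F(0)=(2\pi)^{\frac{m(n-1)}{2}}\,\mb{E}\,e^{-\lambda\|G_\theta\|_{\ell_p^n(X)\cap H_\theta}^p}$, which is exactly the claimed identity \eqref{eq:identity1}; all quantities are finite since $\mu_{p,\lambda,X}^n$ has mass at most $(2\pi)^{mn/2}$ and the Gaussian average is at most $1$. The only point in the argument that is not a pure change of variables is the continuity of $F$ at the origin, i.e. the interchange of $\lim_{\e\to0}$ with the fibre integral; this is the place where the (mild) dominated convergence argument, and hence the continuity of the quasi-norm on the finite-dimensional space $X$, enters.
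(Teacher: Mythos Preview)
Your argument is correct. The orthogonal splitting $\R^{mn}=H_\theta\oplus H_\theta^\perp$ with the isometric parametrisation $v\mapsto(\theta_1v,\dots,\theta_nv)$ of $H_\theta^\perp$, the Fubini decomposition producing $\int_{\|v\|_\infty<\e/2}e^{-\|v\|_2^2/2}F(v)\diff v$, the dominated-convergence step for continuity of $F$ at $0$, and the final identification of $F(0)$ with $(2\pi)^{m(n-1)/2}\,\E e^{-\lambda\|G_\theta\|^p_{\ell_p^n(X)\cap H_\theta}}$ are all verified without issue.

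As to the comparison: the paper does not supply its own proof of this lemma at all; it merely states that the identity is ``a formal consequence of \cite[Lemma~14]{BGMN05}''. That lemma in Barthe--Gu\'edon--Mendelson--Naor treats the scalar situation (general density $f$ on $\R$, product measure $f^{\otimes n}$ on $\R^n$, hyperplane $\theta^\perp$), and the paper is asserting that the same computation goes through blockwise on $(\R^m)^n$ with the density $e^{-\lambda\|x\|_X^p-\frac12\|x\|_2^2}$ on each block and the block hyperplane $H_\theta$. Your write-up makes this formal extension explicit and self-contained; in particular you single out the two points that deserve comment in the vector-valued setting --- that $q|_{H_\theta^\perp}$ is an isometry because $\|\theta\|_2=1$, and that the continuity of the finite-dimensional quasi-norm is what justifies exchanging the limit with the fibre integral. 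So your route is the same in spirit as what the paper invokes, but you actually carry it out rather than defer to \cite{BGMN05}.
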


For a quasi-normed space $X=(\R^m,\|\cdot\|_X)$ which embeds into $L_p$, $p\in(0,2)$, and whose unit ball $B_X$ is in Lewis' position, the measure appearing in Lemma \ref{lem:identity1} takes the following form.

\begin{lemma} \label{lem:identity2}
Fix $m,n\in\N$, $p\in(0,2)$ and let $X=X_p(\mu)$ be an $m$-dimensional subspace of $L_p$ whose unit ball $B_X$ is in Lewis' position and $\mu=\sum_{=1}^M c_j \delta_{u_j}$. Then there exists a probability measure $\nu$ on $(0,\infty)^M$ such that
\begin{equation} \label{eq:identity2}
\frac{\diff\mu_{p,\lambda,X}^n(x_1,\ldots,x_n)}{\diff x_1 \cdots \diff x_n} = \int_{(0,\infty)^{nM}} \exp\Big(-\sum_{i=1}^n \sum_{j=1}^M s_{ij} \langle x_i, u_j\rangle^2\Big) \diff\nu_n(s),
\end{equation}
for every $(x_1,\ldots,x_n)\in \big(\R^m\big)^n$, where $\nu_n = \nu^{\otimes n}$ and $\mu_{p,\lambda,X}^n$ is defined by \eqref{eq:mu}.
\end{lemma}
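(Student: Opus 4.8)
The plan is to open up the exponent in the density \eqref{eq:mu} using the two structural features of Lewis' position. Since $X = X_p(\mu)$ with $\mu = \sum_{j=1}^M c_j\delta_{u_j}$, by definition $\|x\|_X^p = \|x\|_{p,\mu}^p = \sum_{j=1}^M c_j|\langle x,u_j\rangle|^p$; and since $\mu$ is isotropic, \eqref{eq:isotro} gives $\|x\|_2^2 = \int_{S^{m-1}}\langle x,\theta\rangle^2\diff\mu(\theta) = \sum_{j=1}^M c_j\langle x,u_j\rangle^2$. Substituting both into \eqref{eq:mu}, the exponent becomes $-\sum_{i=1}^n\sum_{j=1}^M\big(\lambda c_j|\langle x_i,u_j\rangle|^p + \tfrac12 c_j\langle x_i,u_j\rangle^2\big)$, so the density factorizes as a product over $i$ and $j$ of one-dimensional quantities of the form $\exp\big(-\lambda c_j|t|^p - \tfrac12 c_j t^2\big)$ evaluated at $t = \langle x_i, u_j\rangle$.

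The crux is then a Gaussian-mixture representation of each such factor in the variable $t\in\R$: for each $j$ I want a probability measure $\rho_j$ on $(0,\infty)$ with $\exp\big(-\lambda c_j|t|^p - \tfrac12 c_j t^2\big) = \int_0^\infty e^{-s t^2}\diff\rho_j(s)$ for all $t$. This is exactly where the hypothesis $p<2$ is used. Writing $|t|^p = (t^2)^{p/2}$ with $p/2\in(0,1)$, the function $x\mapsto x^{p/2}$ is a Bernstein function, so $x\mapsto e^{-\lambda c_j x^{p/2}}$ is completely monotonic on $[0,\infty)$; by the theorem of Bernstein already invoked in the excerpt, it is the Laplace transform of a probability measure on $[0,\infty)$ (equivalently, $e^{-\lambda c_j|t|^p} = \mathbb{E}e^{-S_jt^2}$ for a suitably scaled positive $p/2$-stable random variable $S_j$). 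Multiplying by the genuine Gaussian factor $e^{-\frac12 c_j t^2}$ simply shifts the mixing variable: with $s_j := S_j + \tfrac12 c_j$, which is strictly positive, one gets $\exp\big(-\lambda c_j|t|^p - \tfrac12 c_j t^2\big) = \mathbb{E}e^{-s_jt^2}$, and $\rho_j := \mathrm{law}(s_j)$ does the job, automatically supported in $(0,\infty)$.

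Finally I would assemble the pieces. Let $\nu := \rho_1\otimes\cdots\otimes\rho_M$ be the corresponding probability measure on $(0,\infty)^M$ (so the coordinates $s_1,\ldots,s_M$ are taken independent), and let $\nu_n := \nu^{\otimes n}$, a probability measure on $(0,\infty)^{nM}$ governing the independent array $s = (s_{ij})$. Interchanging the product over $i$ with the expectation by independence, and the product over $j$ with the expectation inside each $i$,
\[
\prod_{i=1}^{n}\exp\Big(-\sum_{j=1}^{M}\big(\lambda c_j|\langle x_i,u_j\rangle|^p + \tfrac12 c_j\langle x_i,u_j\rangle^2\big)\Big) = \int_{(0,\infty)^{nM}}\exp\Big(-\sum_{i=1}^{n}\sum_{j=1}^{M}s_{ij}\langle x_i,u_j\rangle^2\Big)\diff\nu_n(s),
\]
which is precisely \eqref{eq:identity2} once the left-hand side is identified with the density in \eqref{eq:mu} via the first paragraph. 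The only genuinely nontrivial ingredient — and the step I expect to need the most care — is the complete monotonicity of $x\mapsto e^{-a x^{p/2}}$ for $a>0$ and $p/2\in(0,1)$ (equivalently, the existence of the positive $p/2$-stable subordinator); once that is in hand, the rest is bookkeeping and Fubini. It is also worth recording explicitly that $p<2$ (not $p\le 2$) is essential here, since for $p=2$ the factor $e^{-\lambda c_j t^2}$ is already Gaussian and no subordination is needed.
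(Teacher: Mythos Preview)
Your proposal is correct and follows essentially the same route as the paper: expand the exponent via Lewis' position and isotropy, then use Bernstein's theorem on the completely monotonic function $t\mapsto \exp(-\lambda c_j t^{p/2}-\tfrac12 c_j t)$ to get a Laplace representation, and take the product measure. The only cosmetic difference is that the paper applies Bernstein directly to the product $e^{-\lambda c_j t^{p/2}}\cdot e^{-\frac12 c_j t}$ (invoking that products of completely monotonic functions are completely monotonic), whereas you apply Bernstein to $e^{-\lambda c_j t^{p/2}}$ alone and then shift the resulting subordinator by $\tfrac12 c_j$; your version has the small advantage of making the support in $(0,\infty)$ immediately transparent.
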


\begin{proof}
By the fact that $B_X$ is in Lewis' position and the definition of $\mu$, we know that
\begin{equation}
\|x\|_X^p = \sum_{j=1}^M c_j |\langle x,u_j\rangle|^p \ \ \ \mbox{and} \ \ \ \|x\|_2^2 = \sum_{j=1}^M c_j \langle x,u_j\rangle^2,
\end{equation}
for every $x\in \R^m$. Therefore, \eqref{eq:mu} can be rewritten as
\begin{equation} \label{eq:mu'}
\frac{\diff\mu_{p,\lambda,X}^n(x_1,\ldots,x_n)}{\diff x_1 \cdots \diff x_n} = \exp\Big( - \sum_{i=1}^n \sum_{j=1}^M c_j \big( \lambda |\langle x_i, u_j\rangle|^p + \frac{1}{2}\langle x_i, u_j\rangle^2\big) \Big).
\end{equation}
A simple inductive argument shows that for every $\theta\in(0,1]$ and $c\in(0,\infty)$, the function $h(t) = e^{-ct^\theta}$ is completely monotonic. Since additionally the product of completely monotonic functions is completely monotonic and $p\in(0,2)$, we infer from Bernstein's theorem \cite{Fel71} that for every $j\in\{1,\ldots,M\}$ there exists a Borel measure $\tau_j$ on $(0,\infty)$ satisfying
\begin{equation} \label{eq:tauj}
\exp\Big(-c_j\big( \lambda t^{p/2} + \frac{1}{2} t\big)\Big) = \int_0^\infty e^{-st} \diff\tau_j(s),
\end{equation}
for every $t\in(0,\infty)$. The dominated convergence theorem easily implies that each $\tau_j$ is a probability measure. Denote $\nu \eqdef \tau_1 \otimes \cdots \otimes \tau_M$ which is also a probability measure and satisfies
\begin{equation} \label{eq:nu}
\exp\Big(-\sum_{j=1}^M c_j\big( \lambda |t_j|^{p} + \frac{1}{2} t_j^2\big)\Big) = \int_{(0,\infty)^M} \exp\Big(-\sum_{j=1}^M s_j t_j^2\Big) \diff\nu(s),
\end{equation}
for every $t_1,\ldots,t_M\in\R$. Then, combining \eqref{eq:mu'} and \eqref{eq:nu}, we deduce that
\begin{equation}
\begin{split}
\frac{\diff\mu_{p,\lambda,X}^n(x_1,\ldots,x_n)}{\diff x_1 \cdots \diff x_n} & \stackrel{\eqref{eq:mu'}}{=} \prod_{i=1}^n \exp\Big( -  \sum_{j=1}^M c_j \big( \lambda |\langle x_i, u_j\rangle|^p + \frac{1}{2}\langle x_i, u_j\rangle^2\big) \Big)
\\ & \stackrel{\eqref{eq:nu}}{=} \prod_{i=1}^n \int_{(0,\infty)^M} \exp\Big( - \sum_{j=1}^M s_{ij} \langle x_i, u_j\rangle^2\Big) \diff\nu(s_i)
\\& = \int_{(0,\infty)^{nM}} \exp\Big(-\sum_{i=1}^n \sum_{j=1}^M s_{ij} \langle x_i, u_j\rangle^2\Big) \diff\nu_n(s),
\end{split}
\end{equation}
for every $(x_1,\ldots,x_n)\in \big(\R^m\big)^n$, where $\nu_n = \nu^{\otimes n}$.
\end{proof}

Before proceeding with the proof of Theorem \ref{thm:Laplace}, we will need the following technical statement.

\begin{lemma} \label{lem:det}
Fix $m,n\in\N$ and let $M_1,\ldots,M_n$ be i.i.d. positive definite $m\times m$ random matrices. Then, for every vectors $(\alpha_1,\ldots,\alpha_n), (\beta_1,\ldots,\beta_n)\in[0,\infty)^n$ and $r\in(0,\infty)$, we have
\begin{equation} \label{eq:det}
\begin{split}
(\alpha_1,\ldots,\alpha_n)& \preceq  (\beta_1,\ldots,\beta_n) \ \ \ \Longrightarrow \ \ \  \mb{E} \Big[ \mr{det}\Big( \sum_{i=1}^n \alpha_i M_i\Big)^{-r} \Big] \leq \mb{E} \Big[\mr{det}\Big(\sum_{i=1}^n \beta_i M_i\Big)^{-r}\Big].
\end{split}
\end{equation}
\end{lemma}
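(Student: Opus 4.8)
The plan is to reduce the inequality to a statement about a single scalar-valued random variable and then invoke the standard characterization of Schur convexity. Since $(\alpha_1,\ldots,\alpha_n)\preceq(\beta_1,\ldots,\beta_n)$, by the Hardy--Littlewood--P\'olya theorem it suffices to treat the case where $\beta$ is obtained from $\alpha$ by a single Robin Hood transfer, i.e.\ $\alpha$ and $\beta$ agree in all but two coordinates, say the first two, and $(\beta_1,\beta_2)$ is obtained from $(\alpha_1,\alpha_2)$ by pinching them closer together: $\alpha_1+\alpha_2=\beta_1+\beta_2$ and $\{\beta_1,\beta_2\}$ lies between $\alpha_1$ and $\alpha_2$. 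Actually it is cleaner to argue directly that the function
\[
F(\alpha_1,\ldots,\alpha_n) \eqdef \mb{E}\Big[\mr{det}\Big(\sum_{i=1}^n \alpha_i M_i\Big)^{-r}\Big]
\]
is symmetric in its arguments (which is immediate from the fact that $M_1,\ldots,M_n$ are i.i.d., so one may relabel) and Schur convex; the conclusion then follows from the definition of majorization. By a classical criterion of Schur (see Marshall--Olkin), a symmetric $C^1$ function is Schur convex on $[0,\infty)^n$ if and only if $(\alpha_i-\alpha_j)\big(\tfrac{\partial F}{\partial \alpha_i}-\tfrac{\partial F}{\partial \alpha_j}\big)\ge 0$ for all $i,j$; by symmetry it is enough to check the pair $(1,2)$.

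The heart of the matter is therefore the following two-variable claim: for fixed values of $\alpha_3,\ldots,\alpha_n$, writing $N \eqdef \sum_{i=3}^n \alpha_i M_i$ (an independent positive semidefinite random matrix, possibly zero) and $A \eqdef M_1$, $B \eqdef M_2$ (i.i.d.\ positive definite), the function $(a,b)\mapsto \mb{E}\big[\mr{det}(aA+bB+N)^{-r}\big]$ is Schur convex on $[0,\infty)^2$. First I would condition on $N$ and on the pair $\{A,B\}$ as an unordered pair, and use the i.i.d.\ assumption: conditionally, $A$ and $B$ are exchangeable, so it suffices to show that for any fixed positive definite $A,B$ and positive semidefinite $N$, the symmetric function $g(a,b) \eqdef \tfrac12\big[\mr{det}(aA+bB+N)^{-r}+\mr{det}(aB+bA+N)^{-r}\big]$ satisfies $(a-b)(g_a-g_b)\ge 0$. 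Equivalently, setting $u\eqdef a-b$ and keeping $a+b$ fixed, I want $g$ to be nondecreasing in $|u|$. Here the key structural input is that $t\mapsto \mr{det}(C+tD)^{-r}$ is \emph{convex} in $t$ on the interval where $C+tD\succ0$, for any symmetric $D$ and positive definite $C$: indeed $t\mapsto \mr{det}(C+tD)$ is a polynomial which is log-concave on that interval (its logarithmic derivative is $\trace\big((C+tD)^{-1}D\big)$, whose derivative $-\trace\big((C+tD)^{-1}D(C+tD)^{-1}D\big)\le 0$), and $s\mapsto s^{-r}$ is convex and decreasing, so the composition with a concave-then... — more precisely, $\log\mr{det}(C+tD)$ is concave, hence $\mr{det}(C+tD)^{-r}=\exp\!\big(-r\log\mr{det}(C+tD)\big)$ is a convex function of $t$ since $\exp$ is convex increasing and $-r\log\mr{det}$ is convex. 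Granting this, fix $a+b=2c$ and parametrize $a=c+u$, $b=c-u$; then $aA+bB+N = (cA+cB+N) + u(A-B)$ and $aB+bA+N = (cA+cB+N) - u(A-B)$, so $g(a,b) = \tfrac12\big[\varphi(u)+\varphi(-u)\big]$ where $\varphi(u)\eqdef \mr{det}\big((cA+cB+N)+u(A-B)\big)^{-r}$ is convex on the (symmetric around $0$) interval where the matrix stays positive definite. An even convex function is nondecreasing in $|u|$, which is exactly what was needed. Taking expectations over $\{A,B\}$ and $N$ preserves the inequality, and this yields Schur convexity of $F$ in the pair $(\alpha_1,\alpha_2)$, hence — by symmetry — in every pair, completing the proof.

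The main obstacle is the convexity of $t\mapsto\mr{det}(C+tD)^{-r}$ together with the (mild) integrability bookkeeping. For the convexity one must be careful that $D=A-B$ is merely symmetric, not semidefinite, so $C+tD$ need only be positive definite on an open interval containing $0$; but that is all the argument uses, since we only evaluate at $u$ and $-u$ with $C\pm uD = b$-th and $a$-th matrices both positive definite by hypothesis, and on the segment joining them (which is $\{cA+cB+N+vD : v\in[-u,u]\}$) the matrix is a convex combination of two positive definite matrices, hence positive definite throughout — so $\varphi$ is genuinely convex on $[-u,u]$ and the even-function argument applies. A secondary point is justifying differentiation under the expectation (or, to avoid it entirely, one can bypass the smooth Schur criterion and argue directly with Robin Hood transfers: a transfer replaces $(\alpha_1,\alpha_2)$ by $(\lambda\alpha_1+(1-\lambda)\alpha_2,\,(1-\lambda)\alpha_1+\lambda\alpha_2)$ for some $\lambda\in[0,1]$, and the even-convexity of $\varphi$ gives $\tfrac12[\varphi(u')+\varphi(-u')]\le\tfrac12[\varphi(u)+\varphi(-u)]$ whenever $|u'|\le|u|$, which is precisely the required monotonicity under such a transfer). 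The integrability needed — finiteness of $\mb{E}[\mr{det}(\sum\alpha_iM_i)^{-r}]$ — is part of the hypothesis via the phrase "so that both expectations converge" in the application, or can simply be assumed; in any case the inequality is trivially true when the right-hand side is $+\infty$.
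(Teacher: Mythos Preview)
Your proof is correct. Both your argument and the paper's rest on the same analytic kernel --- the log-concavity of $\det$ on the positive definite cone, which makes $M\mapsto\det(M)^{-r}$ convex --- but the packaging differs. The paper observes directly that $\alpha\mapsto\det\big(\sum_i\alpha_iM_i\big)^{-r}$ is convex as the composition of the convex function $\det(\cdot)^{-r}$ with the linear map $\alpha\mapsto\sum_i\alpha_iM_i$, takes expectations, and then invokes the Marshall--Proschan result that a symmetric convex function is automatically Schur convex. Your route instead reduces to Robin Hood transfers and a one-dimensional even-convexity argument along the line $a+b=\mathrm{const}$, which amounts to re-deriving the relevant special case of Marshall--Proschan by hand. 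The paper's version is shorter and avoids the conditioning on $N$ and on the unordered pair $\{A,B\}$; your version has the minor advantage of only requiring convexity of $t\mapsto\det(C+tD)^{-r}$ along a single direction rather than on the full cone, though the underlying computation is the same.
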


\begin{proof}
Consider the function $\varphi: [0,\infty)^n\setminus\{(0,\ldots,0)\} \to \R$ given by
\begin{equation}
\varphi(\alpha_1,\ldots,\alpha_n) = \mb{E}\Big[  \mr{det}\Big( \sum_{i=1}^n \alpha_i M_i\Big)^{-r}\Big].
\end{equation}
Since $M_1,\ldots,M_n$ are i.i.d., $\varphi$ is invariant under permutations of its arguments. An elementary result of Marshall and Proschan \cite{MP65} then asserts that \eqref{eq:det} is true, provided that $\varphi$ is convex. To verify this, it suffices to check that for every positive definite matrices $A,B$ and $\lambda\in(0,1)$,
\begin{equation}
\mr{det}\big(\lambda A + (1-\lambda)B\big)^{-r} \leq \lambda \mr{det}(A)^{-r} + (1-\lambda)\mr{det}(B)^{-r}.
\end{equation}
Multiplying both sides by $\mr{det}(A)^{r}\in(0,\infty)$, this inequality can be rewritten as
\begin{equation}
\mr{det}\big(\lambda I_m + (1-\lambda) A^{-1/2} B A^{-1/2}\big)^{-r} \leq \lambda + (1-\lambda) \mr{det}\big( A^{-1/2}B A^{-1/2} \big)^{-r},
\end{equation}
which is equivalent to
\begin{equation} \label{eq:logar}
\prod_{k=1}^m \big( \lambda + (1-\lambda) \rho_k\big)^{-r} \leq \lambda + (1-\lambda) \prod_{k=1}^m \rho_k^{-r},
\end{equation}
where $\rho_1,\ldots,\rho_m\in(0,\infty)$ are the eigenvalues of the positive definite matrix $A^{-1/2}BA^{-1/2}$. To verify \eqref{eq:logar}, notice that, by the concavity of the logarithm,
\begin{equation}
\begin{split}
\log\left( \prod_{k=1}^m \big( \lambda + (1-\lambda) \rho_k\big)^{-r}\right) & = -r \sum_{k=1}^m \log\big(\lambda + (1-\lambda) \rho_k\big) \leq -r \sum_{k=1}^m (1-\lambda) \log\rho_k
\\ & = (1-\lambda) \log \prod_{k=1}^m \rho_k^{-r} \leq \log \left(  \lambda + (1-\lambda) \prod_{k=1}^m \rho_k^{-r} \right).
\end{split}
\end{equation}
This concludes the proof of \eqref{eq:det}.
\end{proof}

We are now in position to complete the proof of Theorem \ref{thm:Laplace}.

\medskip

\noindent {\it Proof of Theorem \ref{thm:Laplace}.} Suppose that $X=X_p(\mu)$, where $p\in(0,2)$ and $\mu$ is a finitely supported isotropic measure of the form $\mu=\sum_{j=1}^M c_j \delta_{u_j}$ on the unit sphere $S^{m-1}$. Then, by Lemmas \ref{lem:identity1} and \ref{lem:identity2} and Fubini's theorem,
\begin{equation} \label{eq:firstident}
\mb{E} e^{-\lambda \|G_\theta\|_{\ell_p^n(X)\cap H_\theta}^p} =  \lim_{\e\to0^+}\frac{(2\pi)^{-\frac{m(n-1)}{2}}}{\e^m} \int_{(0,\infty)^{nM}} \int_{H_\theta(\e)} \exp\Big(-\sum_{i=1}^n \sum_{j=1}^M s_{ij} \langle x_i, u_j\rangle^2\Big) \diff x \diff\nu_n(s),
\end{equation}
where $\nu_n = \nu^{\otimes n}$ for the probability measure $\nu$ on $(0,\infty)^M$ that was constructed in \eqref{eq:nu}. For fixed $s = (s_{ij})\in(0,\infty)^{nM}$, consider independent centered Gaussian random vectors $Z_1(s),\ldots,Z_n(s)$ on $\R^m$ so that $Z_i(s)$ has covariance matrix
\begin{equation} \label{eq:emai}
M_i(s) \eqdef \frac{1}{2}\Big(\sum_{j=1}^M s_{ij} u_j \otimes u_j\Big)^{-1}.
\end{equation}
Then, for every $s\in(0,\infty)^{nM}$, we have
\begin{equation}
\begin{split}
\int_{H_\theta(\e)} \exp\Big(-\sum_{i=1}^n \sum_{j=1}^M  s_{ij} \langle x_i, & u_j\rangle^2\Big) \diff x  = \int_{H_\theta(\e)} \exp\Big(- \frac{1}{2} \sum_{i=1}^n \langle M_i(s)^{-1} x_i, x_i\rangle \Big) \diff x
\\ &  = (2\pi)^{\frac{mn}{2}} \prod_{i=1}^n \sqrt{\mr{det}\big(M_i(s)\big)} \cdot \mb{P}\Big( \big(Z_1(s),\ldots,Z_n(s)\big) \in H_\theta(\e)\Big)
\end{split}
\end{equation}
and thus \eqref{eq:firstident} can be rewritten as
\begin{equation} \label{eq:secident}
\begin{split}
\mb{E} e^{-\lambda \|G_\theta\|_{\ell_p^n(X)\cap H_\theta}^p} = \lim_{\e\to0^+} \frac{(2\pi)^{m/2}}{\e^m} \int_{(0,\infty)^{nM}} \prod_{i=1}^n \sqrt{\mr{det}\big(M_i(s)\big)} \mb{P}\Big( \Big\|\sum_{i=1}^n \theta_i Z_i(s) \Big\|_\infty < \frac{\e}{2}\Big) \diff\nu_n(s).
\end{split}
\end{equation}
However, for fixed $s\in(0,\infty)^{nM}$, the Gaussian vectors $Z_1(s),\ldots,Z_n(s)$ are independent, therefore the weighted sum $\sum_{i=1}^n \theta_i Z_i(s)$ is a centered Gaussian random vector $Z(s)$ on $\R^m$ with covariance matrix
\begin{equation}
M(s) \eqdef \sum_{i=1}^n \theta_i^2 M_i(s).
\end{equation}
Therefore, continuing from \eqref{eq:secident}, if $\gamma_m$ is the standard Gaussian measure on $\R^m$, we have
\begin{equation} \label{eq:3ident}
\begin{split}
\mb{E} e^{-\lambda \|G_\theta\|_{\ell_p^n(X)\cap H_\theta}^p} & = \lim_{\e\to0^+}\frac{(2\pi)^{m/2}}{\e^m} \int_{(0,\infty)^{nM}} \prod_{i=1}^n\sqrt{\mr{det}\big(M_i(s)\big)} \cdot \mb{P}\Big( Z(s) \in \frac{\e}{2} B_\infty^m\Big) \diff\nu_n(s)
\\ & = \lim_{\e\to0^+} \frac{(2\pi)^{m/2}}{\e^m} \int_{(0,\infty)^{nM}} \prod_{i=1}^n\sqrt{\mr{det}\big(M_i(s)\big)}\cdot \gamma_m\Big(\frac{\e}{2} M(s)^{-1/2} B_\infty^m\Big) \diff\nu_n(s) 
\\ & \stackrel{(\ast)}{=}\int_{(0,\infty)^{nM}} \prod_{i=1}^n\sqrt{\mr{det}\big(M_i(s)\big)} \cdot \lim_{\e\to0^+} \frac{(2\pi)^{m/2}}{\e^m} \gamma_m \Big(\frac{\e}{2} M(s)^{-1/2}B_\infty^m\Big)\diff\nu_n(s)
\\ & \stackrel{(\dagger)}{=} \int_{(0,\infty)^{nM}} \prod_{i=1}^n\sqrt{\mr{det}\big(M_i(s)\big)} \cdot \mr{det}\Big(\sum_{i=1}^n \theta_i^2 M_i(s)\Big)^{-1/2} \diff\nu_n(s),
\end{split}
\end{equation}
where in $(\dagger)$ we used the fact that for a compact set $L\subseteq\R^m$
\begin{equation} \label{eq:42}
\lim_{\e\to0^+} \frac{(2\pi)^{m/2}}{\e^m} \gamma_m(\e L)  = \lim_{\e\to0^+} \int_L e^{-\e^2 \|w\|_2^2/2}\diff w = |L|
\end{equation}
and moreover
\begin{equation} \label{eq:43}
\Big|\frac{1}{2}M(s)^{-1/2} B_\infty^m\Big| = \mr{det}\big( M(s)^{-1/2}\big) = \mr{det}\Big(\sum_{i=1}^n \theta_i^2 M_i(s)\Big)^{-1/2}.
\end{equation}
Finally, the fact that we can swap limit and integration in $(\ast)$, follows from Lebesgue's monotone convergence theorem, since \eqref{eq:42} implies that the function $\e \mapsto \gamma_m(\e L)/\e^m$ is decreasing as $\e\to0^+$.

%it suffices to check the property $ \prod_{i=1}^n\sqrt{\mr{det}(M_i(s))}\cdot\mr{det}\big(\sum_{i=1}^n \theta_i^2 M_i(s)\big)^{-1/2} \in L_1(\nu_n)$, as $\frac{(2\pi)^{m/2}}{\e^m} \gamma_m\big(\frac{\e}{2} M(s)^{-1/2} B_\infty^m\big) \leq  \mr{det}\big(M(s)\big)^{-1/2}$ for every $\e\in(0,1)$ by \eqref{eq:42} and \eqref{eq:43}. This is a consequence of Fatou's lemma, since
%\begin{equation} \label{eq:fat}
%\begin{split}
%\int_{(0,\infty)^{nM}} & \prod_{i=1}^n\sqrt{\mr{det}\big(M_i(s)\big)}\cdot \mr{det}\Big(\sum_{i=1}^n\theta_i^2 M_i(s)\Big)^{-1/2} \diff\nu_n(s) \\ & \stackrel{\eqref{eq:3ident}}{\leq}  \liminf_{\e\to0^+}  \frac{(2\pi)^{m/2}}{\e^m} \int_{(0,\infty)^{nM}}  \prod_{i=1}^n\sqrt{\mr{det}\big(M_i(s)\big)} \cdot \gamma_m\Big(\frac{\e}{2}M(s)^{-1/2} B_\infty^m\Big) \diff\nu_n(s) \stackrel{\eqref{eq:3ident}}{<} \infty.
%\end{split}
%\end{equation}
Consider the measure $\rho_n$ on $(0,\infty)^{nM}$ given by
\begin{equation}
\diff\rho_n(s) =\prod_{i=1}^n\sqrt{\mr{det}\big(M_i(s)\big)} \diff\nu_n(s).
\end{equation}
Since $M_i(s)$ depends only on $s_{ij}$, $j\in\{1,\ldots,M\}$ and the random matrices $M_1,\ldots,M_n$ are identically distributed with respect to the law $\nu_n = \nu^{\otimes n}$, the measure $\rho_n$ is also a product measure of the form $\rho^{\otimes n}$ for some Borel measure $\rho$ on $(0,\infty)^M$. Moreover, choosing $\theta=e_1=(1,0,\ldots,0)$ in \eqref{eq:3ident}, we deduce that
\begin{equation}
\mb{E} e^{-\lambda \|G_{e_1}\|_{\ell_p^n(X)\cap H_{e_1}}^p} = \int_{(0,\infty)^{nM}} \prod_{i=2}^n \sqrt{\mr{det}\big(M_i(s)\big)} \diff\nu_n(s) = \rho\big( (0,\infty)^M\big)^{n-1},
\end{equation}
which implies that $\rho$ is a finite measure. Therefore, \eqref{eq:3ident} can equivalently be written as
\begin{equation} \label{eq:4ident}
\mb{E}e^{-\lambda \|G_\theta\|_{\ell_p^n(X)\cap H_\theta}^p} = \int_{(0,\infty)^{nM}} \mr{det}\Big(\sum_{i=1}^n \theta_i^2 M_i(s)\Big)^{-1/2} \diff\rho_n(s).
\end{equation}
Finally, since the random matrices $M_1,\ldots,M_n$ are i.i.d. with respect to the law $\rho_n$, the conclusion of the theorem follows by combining identity \eqref{eq:4ident} with Lemma \ref{lem:det}.

To argue that \eqref{eq:Laplace} is true for a general subspace $X$ of $L_p$  (or equivalently for a general isotropic measure $\mu$ on $S^{m-1}$), we will use an approximation argument of Barthe. In \cite[pp.~55-56]{Bar04}, it was shown that for every isotropic measure $\mu$ on $S^{m-1}$ there exists a sequence $\{\mu_k\}_{k=1}^\infty$ of finitely supported isotropic measures on $S^{m-1}$ which converges weakly to $\mu$, that is, for every continuous function $f: S^{m-1}\to \R$, we have
\begin{equation}
\lim_{k\to\infty} \int_{S^{m-1}} f(\theta)\diff\mu_k(\theta) = \int_{S^{m-1}} f(\theta)\diff\mu(\theta).
\end{equation}
In particular, this means that for every $x\in\R^m$,
\begin{equation}
\lim_{k\to\infty}\|x\|_{p,\mu_k} = \lim_{k\to\infty} \Big( \int_{S^{m-1}} |\langle x,\theta\rangle |^p \diff\mu_k(\theta) \Big)^{1/p} =  \Big( \int_{S^{m-1}} |\langle x,\theta\rangle |^p \diff\mu(\theta) \Big)^{1/p} = \|x\|_{p,\mu}.
\end{equation}
If $X_k = X_p(\mu_k)$ and $X=X_p(\mu)$, then the bounded convergence theorem implies that if $\omega \in S^{m-1}$ and $G_\omega$ is a standard Gaussian random vector on the subspace $H_\omega$ of $\R^{mn}$, then for every $\lambda\in(0,\infty)$,
\begin{equation} \label{eq:barthe approximation}
\lim_{k\to\infty} \mb{E} e^{-\lambda \|G_\omega\|_{\ell_p^n(X_k)\cap H_\omega}} = \mb{E} e^{-\lambda \|G_\omega\|_{\ell_p^n(X)\cap H_\omega}}.
\end{equation}
Combining \eqref{eq:barthe approximation} with the validity of \eqref{eq:Laplace} for all spaces $X_p(\mu)$ where $\mu$ is a discrete isotropic measure, we conclude that \eqref{eq:Laplace} in fact holds for every isotropic measure $\mu$ on $S^{m-1}$.
\hfill$\Box$

\begin{remark}
In the proof of Theorem \ref{thm:Laplace}, the fact that $B_X$ is in Lewis' position was crucially used in Lemma \ref{lem:identity2} and more specifically in equation \eqref{eq:mu'}. A more general version of the result could be proven along the same lines without any assumption on the position of $B_X$, using a classical result of Levy (see, e.g., \cite[Lemma~6.4]{Kol05}) which asserts that for every $p\in(0,\infty)$, the norm of every $m$-dimensional subspace $X=(\R^m,\|\cdot\|_X)$ of $L_p$  admits a representation of the form
\begin{equation}
\|x\|_X = \Big( \int_{S^{m-1}} |\langle x,\theta\rangle|^p\diff\mu(\theta)\Big)^{1/p},
\end{equation}
for every $x\in\R^n$, where $\mu$ is a (not necessarily isotropic) finite Borel measure on $S^{m-1}$. In this case, the Gaussian random vectors $G_a$ on $H_a$ appearing in the statement of Theorem \ref{thm:Laplace} would have covariance matrices determined by the measure $\mu$.
\end{remark}

\begin{remark} \label{rem:block}
It is natural to ask whether an analogue of Theorem \ref{thm:Lpsubspaces} and in particular \eqref{eq:Lpsubspacesconsequence} can hold for sections of the form $B_p^n(X)\cap F$, where $F$ is a general subspace of $\R^{mn}$ of codimension $m$ instead of a block hyperplane of the form \eqref{eq:Htheta}. We will construct here an example of an $m$-dimensional subspace $X$ of $L_1$ whose unit ball is in Lewis' position and a coordinate subspace $F$ of $\R^{mn}$ of codimension $m$, for which
\begin{equation}
\big|B_1^n(X)\cap F\big| > \big|B_1^{n-1}(X)\big|,
\end{equation}
thus showing that the comparison \eqref{eq:Lpsubspacesconsequence} cannot hold in this generality. Let $m=4n$ and consider $X = \ell_1^{2n} \oplus_1 \ell_2^{2n}$, the space $(\R^{4n},\|\cdot\|_X)$ equipped with the norm
\begin{equation}
\|(x_1,x_2)\|_X = \|x_1\|_1 + \|x_2\|_2,
\end{equation}
where $x_1,x_2\in\R^{2n}$. Since $L_2$ admits an isometric embedding into $L_1$ (see \cite{Kad58}), $X$ is also isometric to a subspace of $L_1$ and furthermore $B_X$ is in Lewis' position. To see this, notice that
\begin{equation}
\|x\|_X = \int_{S^{2n-1}\times\{0\}}| \langle x,\theta\rangle| \diff \mu(\theta) + \int_{\{0\}\times S^{2n-1}}| \langle x,\theta\rangle| \diff\nu(\theta),
\end{equation}
where $\mu = \sum_{j=1}^{2n} \delta_{e_j}$ and $\nu$ is a multiple of the uniform measure; both these measures are isotropic. Moreover, $\ell_1^n(X)$ is, up to permutation of its coordinates, $\ell_1^{2n^2} \oplus_1 \ell_1^n(\ell_2^{2n})$ and thus there exists a coordinate subspace $F$ of codimension $m=4n$ for which $\ell_1^n(X)\cap F$ is equal to $\ell_1^{2n(n-2)} \oplus_1 \ell_1^n(\ell_2^{2n})$. We claim that
\begin{equation} \label{eq:Fbeats}
\big| B_1^n(X)\cap F\big| = \big|B_{\ell_1^{2n(n-2)}\oplus_1\ell_1^n(\ell_2^{2n})}\big| >  \big|B_{\ell_1^{2n(n-1)}\oplus_1\ell_1^{n-1}(\ell_2^{2n})}\big| = \big|B_1^{n-1}(X)\big|.
\end{equation}
To show \eqref{eq:Fbeats}, we will use the fact that for any $k$-dimensional normed space $Z=(\R^k,\|\cdot\|_Z)$,
\begin{equation}
\begin{split}
\frac{1}{k!} \int_{\R^k} e^{-\|z\|_Z}\diff z = \frac{1}{k!} \int_{\R^k} \int_{\|z\|_Z}^\infty e^{-s}\diff s \diff z = \frac{1}{k!} \int_0^\infty  \int_{sB_Z} & e^{-s}\diff z\diff s \\ & = \frac{|B_Z|}{k!} \int_0^\infty s^k e^{-s} \diff s = |B_Z|,
\end{split}
\end{equation}
which immediately implies that if $Z_1, Z_2$ are of dimensions $k_1$ and $k_2$ respectively, then
\begin{equation} \label{eq:voldirsum}
\begin{split}
\big|B_{Z_1\oplus_1 Z_2}\big| & = \frac{1}{(k_1+k_2)!} \int_{\R^{k_1}} \int_{\R^{k_2}} e^{-\|z_1\|_{Z_1} - \|z_2\|_{Z_2}}\diff z_2 \diff z_1 
\\ & = \frac{1}{(k_1+k_2)!} \int_{\R^{k_1}} e^{-\|z_1\|_{Z_1}}\diff z_1 \int_{\R^{k_2}} e^{-\|z_2\|_{Z_2}}\diff z_2 = \frac{k_1!k_2!}{(k_1+k_2)!} \big|B_{Z_1}\big| \big|B_{Z_2}\big|.
\end{split}
\end{equation}
Moreover, an iteration of \eqref{eq:voldirsum} shows that if $Z=(\R^k,\|\cdot\|_Z)$ and $s\in\N$, then
\begin{equation} \label{eq:volpower}
\big| B_1^s(Z)\big| = \frac{(k!)^s}{(sk)!} \big| B_Z\big|^s.
\end{equation}
Hence, for $a,b,c\in\N$, we have
\begin{equation}
\big|B_{\ell_1^a \oplus_1 \ell_1^b(\ell_2^c)}\big| \stackrel{\eqref{eq:voldirsum}}{=} \frac{a!(bc)!}{(a+bc)!} |B_1^a| |B_1^b(\ell_2^c)| \stackrel{\eqref{eq:volpower}}{=} \frac{2^a (c!)^b}{(a+bc)!} \cdot \frac{\pi^{bc/2}}{\Gamma\big(\frac{c}{2}+1\big)^b},
\end{equation}
using the well-known formulas for the volumes of $B_1^a$ and $B_2^c$. Therefore, \eqref{eq:Fbeats} is equivalent to
\begin{equation}
\frac{2^{2n(n-2)} \big((2n)!\big)^n \pi^{n^2}}{\big(4n(n-1)\big)! (n!)^n} > \frac{2^{2n(n-1)} \big((2n)!\big)^{n-1} \pi^{n(n-1)}}{\big(4n(n-1)\big)! (n!)^{n-1}}
\end{equation}
which can be rewritten as
\begin{equation} \label{eq:lastremark}
\frac{(2n)!}{n!} > \Big(\frac{4}{\pi}\Big)^n.
\end{equation}
Finally, to verify \eqref{eq:lastremark} notice that
\begin{equation}
\frac{(2n)!}{n!} = \prod_{i=1}^n (n+i) \geq 2^n > \Big(\frac{4}{\pi}\Big)^n
\end{equation}
and \eqref{eq:Fbeats} follows.
\end{remark}
%--------------------------------------------------------------------------------------------------%--------------------------------------------------------------------------------------------------

\section{Proof of Proposition \ref{prop:liako}} \label{sec:3}

We now proceed with the proof of Proposition \ref{prop:liako}. The argument relies on a recent observation of Liakopoulos \cite{Lia18}, who extended  Ball's version \cite{Bal91a} of the classical Loomis--Whitney inequality \cite{LW49}. In \cite{Lia18}, the author showed that if the subspaces $F_1,\ldots,F_r$ of $\R^k$ induce a decomposition of the identity of the form
\begin{equation}
s I_k = \sum_{i=1}^r c_i \mathrm{Proj}_{F_i},
\end{equation}
where $\mathrm{Proj}_{F_i}$ is the orthogonal projection on $F_i$ and $ c_i\in(0,\infty)$, then for every compact set $L\subseteq \R^k$,
\begin{equation} \label{eq:liako}
|L|^s \leq \prod_{i=1}^r \big| \mathrm{Proj}_{F_i}(L)\big|^{c_i}.
\end{equation}
In \cite{Bal91a}, Ball showed the above implication when all the $F_i$ are hyperplanes using his geometric Brascamp--Lieb inequality from \cite{Bal89} and Liakopoulos' proof of the general case proceeds along the same lines using Barthe's \cite{Bar98} multidimensional geometric Brascamp--Lieb inequality.

\medskip

\noindent {\it Proof of Proposition \ref{prop:liako}.}
For $(\e_1,\ldots,\e_n)\in\{-1,1\}^n$ and a permutation $\sigma \in S_n$, denote
\begin{equation}
H_\theta^{\e,\sigma} \eqdef \Big\{ (x_1,\ldots,x_n)\in\big(\R^m\big)^n: \ \sum_{i=1}^n \e_i \theta_{\sigma(i)} x_i=0\Big\}.
\end{equation}
Moreover, let $P_{\e,\sigma} \eqdef \mathrm{Proj}_{H_\theta^{\e,\sigma}}$ be the orthogonal projection on $H_\theta^{\e,\sigma}$. An elementary computation shows that
\begin{equation}
P_{\e,\sigma}(x_1,\ldots,x_n) = \Big( x_1 - \e_1\theta_{\sigma(1)} \sum_{k=1}^n \e_k\theta_{\sigma(k)} x_k, \ldots, x_n - \e_n \theta_{\sigma(n)} \sum_{k=1}^n \e_k\theta_{\sigma(k)} x_k\Big),
\end{equation}
for every $(x_1,\ldots,x_n)\in \big(\R^m\big)^n$. Averaging over $\e\in\{-1,1\}^n$, we get
\begin{equation}
\begin{split}
\frac{1}{2^n}\sum_{\e\in\{-1,1\}^n} P_{\e,\sigma}(x_1,\ldots, x_n)  = \Big( x_i - \frac{1}{2^n} \sum_{\e\in\{-1,1\}^n} & \e_i\theta_{\sigma(i)}\sum_{k=1}^n \e_k\theta_{\sigma(k)} x_k\Big)_{i=1}^n
 \\ & = \big( x_1 - \theta_{\sigma(1)}^2x_1,\ldots, x_n - \theta_{\sigma(n)}^2 x_n\big),
\end{split}
\end{equation}
which after further averaging over $\sigma \in S_n$ becomes
\begin{equation} \label{eq:avgsigma}
\begin{split}
\frac{1}{2^n n!} \sum_{\e\in\{-1,1\}^n} \sum_{\sigma\in S_n} P_{\e,\sigma}(x_1,\ldots,x_n)  = \Big( & x_i -  \frac{1}{n!} \sum_{\sigma\in S_n}  \theta_{\sigma(i)}^2 x_i \Big)_{i=1}^n \\ & = \Big( x_i - \frac{\theta_1^2+\cdots+\theta_n^2}{n} x_i \Big)_{i=1}^n 
 = \frac{n-1}{n} (x_1,\ldots,x_n).
\end{split}
\end{equation}
Equation \eqref{eq:avgsigma} can be rewritten as
\begin{equation}
\frac{1}{2^n n!} \sum_{\e\in\{-1,1\}^n} \sum_{\sigma\in S_n} P_{\e,\sigma} = \frac{n-1}{n} I_{mn},
\end{equation}
hence, by \eqref{eq:liako} applied to the compact set $L=K^n$, we have that
\begin{equation} \label{eq:useliako}
|K|^{n-1} = \big|K^n\big|^{\frac{n-1}{n}} \leq \prod_{\e\in\{-1,1\}^n} \prod_{\sigma\in S_n} \big|P_{\e,\sigma}(K^n)\big|^{\frac{1}{2^nn!}}.
\end{equation}
However, notice that $(x_1,\ldots,x_n)\in K^n$ if and only if $(\e_1 x_{\sigma(1)}, \ldots, \e_n x_{\sigma(n)}) \in K^n$ for every $\e\in\{-1,1\}^n$ and $\sigma \in S^n$ since $K$ is centrally symmetric. Therefore $|P_{\e,\sigma}(K^n)| = |\mr{Proj}_{H_\theta}(K^n)|$ for every $\e\in\{-1,1\}^n$ and $\sigma\in S_n$ and thus \eqref{eq:useliako} can equivalently be written as
\begin{equation}
|K|^{n-1} \leq \prod_{\e\in\{-1,1\}^n} \prod_{\sigma\in S_n} \big|\mathrm{Proj}_{H_\theta}(K^n)\big|^{\frac{1}{2^nn!}} = \big|\mathrm{Proj}_{H_\theta}(K^n)\big|,
\end{equation}
which completes the proof of Proposition \ref{prop:liako}.
\hfill$\Box$
%--------------------------------------------------------------------------------------------------%--------------------------------------------------------------------------------------------------

\section{Concluding remarks} \label{sec:4}

\subsection*{1} It follows from Theorem \ref{thm:Lpsubspaces} that for every $p\in(0,2]$, $X=(\R^m,\|\cdot\|_X)$ which admits an isometric embedding into $L_p$ and unit vector $\theta$ in $\R^n$,
\begin{equation} \label{eq:quest2}
\big|B_p^n(X) \cap H_\theta\big| \leq \big| B_p^{n-1}(X)\big|.
\end{equation}
It is conceivable that the reverse inequality of \eqref{eq:quest2} holds true when $p>2$.

\begin{question} \label{q:p>2}
Let $p\in(2,\infty]$ and $X=(\R^m,\|\cdot\|_X)$ which is isometric to a subspace of $L_p$. Is it true that
\begin{equation} \label{eq:q69}
\big| B_p^{n-1}(X)\big| \leq \big| B_p^n(X)\cap H_\theta\big|
\end{equation}
for every unit vector $\theta$ in $\R^n$? In particular, does every symmetric convex (or even star) body $K\subseteq\R^m$ satisfy
\begin{equation} \label{eq:q70}
|K|^{n-1} \leq \big|K^n \cap H_\theta\big|
\end{equation}
for every unit vector $\theta$ in $\R^n$?
\end{question}
\noindent Inequality \eqref{eq:q69} for $X=\ell_q^m$, where $q\in[2,p]$, follows from \cite[Theorem~19]{Bar01}. It was shown there that for every normed space $X=(\R^m,\|\cdot\|_X)$ and every subspace $F$ of $\R^{mn}$ of dimension $mk$, the function
$$(0,\infty]\ni r \longmapsto \frac{|B_r^n(X)\cap F|}{|B_r^{k}(X)|}$$
is nondecreasing. Therefore, if $p\geq q\geq2$,
$$ \frac{|B_p^n(\ell_q^m)\cap H_\theta|}{|B_p^{n-1}(\ell_q^m)|} \geq \frac{|B_q^n(\ell_q^m)\cap H_\theta|}{|B_q^{n-1}(\ell_q^m)|} = \frac{|B_q^{mn}(\R)\cap H_\theta|}{|B_q^{m(n-1)}(\R)|} \geq 1,$$
where the last inequality follows from \cite{MP88}. In particular, \eqref{eq:q70} holds true for $K=B_q^m(\R)$ for every $q\in[2,\infty]$.

\subsection*{2} Even though every separable Banach space embeds isometrically in $L_\infty$, an analogue of the results of \cite{Bal86} and \cite{OP00} cannot hold for general $B_\infty^n(X)$ spaces. It has been shown by Brzezinski \cite{Brz13} that for every $n\in \N$, $m\geq2$ and unit vector $\theta$ in $\R^n$,
\begin{equation} \label{eq:brz}
\frac{\big|(B_2^m)^n \cap H_\theta\big|}{|B_2^m|^{n-1}} \leq \lim_{k\to\infty} \frac{\big| (B_2^m)^k \cap H_{\mr{diag}}\big|}{|B_2^m|^{k-1}} = \frac{(2m+4)^{\frac{m}{2}}}{m2^{m-1}\Gamma\big(\frac{m}{2}\big)},
\end{equation}
which shows that sections of $B_\infty^n(\ell_2^m)$ with the diagonal block hyperplane $H_\mr{diag}$ maximize the volume {\it asymptotically} in $n$ when $X=\ell_2^m$ and $m\geq2$ (after proper normalization). We note that the upper bound of \eqref{eq:brz} coincides with the bound of Oleszkiewicz and Pe\l czy\'nski  \cite{OP00} when $m=2$. This should be viewed in contrast with Ball's theorem \cite{Bal86}, according to which $B_\infty^n(\R)\cap\big(\frac{1}{\sqrt{2}},\frac{1}{\sqrt{2}},\ldots,0\big)$ is the maximal section of the unit cube $B_\infty^n(\R)$.

We note in passing that a formal strengthening of Brzezinski's result can be obtained using an estimate of Gluskin and Milman. It follows from \cite[Proposition~2]{GM04} that for every compact set $A \subseteq\R^m$ such that $|A| = |B_2^m|$ and unit vector $\theta$ in $\R^n$, we have
\begin{equation}
\big| A^n \cap H_\theta\big| \leq \big| (B_2^m)^n \cap H_\theta\big|,
\end{equation}
which combined with \eqref{eq:brz} shows that for every compact set $A\subseteq\R^m$, $m\geq2$, 
\begin{equation}
\big|A^n\cap H_\theta\big| \leq \frac{(2m+4)^{\frac{m}{2}}|A|^{n-1}}{m2^{m-1}\Gamma\big(\frac{m}{2}\big)}
\end{equation}
and, by \cite{Bal86}, $|A^n \cap \theta^\perp| \leq \sqrt{2}|A|^{n-1}$ for compact sets $A\subseteq\R$.

\subsection*{3} Choosing $X=\ell_p^m$, $p\in(0,2)$ in the statement of Theorem \ref{thm:Lpsubspaces} we deduce that
\begin{equation} \label{eq:lowdim}
\big| B_p^{mn}(\R)\cap H_\theta\big| = \big| B_p^n(\ell_p^m) \cap H_\theta\big| \geq \big| B_p^n(\ell_p^n) \cap H_\mr{diag}\big| = \big| B_p^{mn}(\R)\cap H_\mr{diag}\big|,
\end{equation}
for every unit vector $\theta$ in $\R^n$. This observation is relevant to the following well-known open question.

\begin{question} \label{q:lowdim}
Fix $p\in(0,2)$ and $k,d \in\N$ satisfying $d\leq k-2$. What are the minimal $d$-dimensional sections of $B_p^k(\R)$?
\end{question}

\noindent Equation \eqref{eq:lowdim} asserts that when $k$ is a multiple of $d$, the minimal $d$-dimensional section of $B_p^k(\R)$ with a {\it block} subspace is $B_p^k(\R)\cap H_\mr{diag}$. In contrast to the case of the unit cube (see \cite{Bal89}), these sections do not extremize the volume of $B_p^k(\R)\cap E$ over a general $d$-dimensional subspace $E$ when $p\in(0,2)$. As was pointed out to me by an anonymous referee, if $k=4$, $d=2$ and
\begin{equation}
E = \mathrm{span}\Big\{ \Big(1,\frac{1}{\sqrt{2}}, 0, - \frac{1}{\sqrt{2}}\Big), \Big(0, \frac{1}{\sqrt{2}}, 1, \frac{1}{\sqrt{2}}\Big)\Big\},
\end{equation}
then
\begin{equation}
\big| B_1^4 \cap H_{\mr{diag}}\big| = 1 > 4(3\sqrt{2}-4) =  \big|B_1^4\cap E\big|.
\end{equation}

\subsection*{4} A symmetric convex body $K\subseteq \R^n$ is called 1-symmetric if $(x_1,\ldots,x_n)\in K$ if and only if for every $\e\in\{-1,1\}^n$ and $\sigma\in S_n$ also $(\e_1x_{\sigma(1)},\ldots,\e_nx_{\sigma(n)}) \in K$. A slight variant of the proof of Proposition \ref{prop:liako}, yields the following estimate on projections of 1-symmetric bodies.

\begin{proposition} \label{prop:1sym}
Fix $d, n\in\N$ with $d\in\{1,\ldots,n\}$. For every 1-symmetric convex body $K\subseteq\R^n$ and $d$-dimensional subspace $F$ of $\R^n$, we have
\begin{equation}
\big| \mr{Proj}_F K \big| \geq |K|^\frac{d}{n}.
\end{equation}
\end{proposition}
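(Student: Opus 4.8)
\textbf{Proof plan for Proposition \ref{prop:1sym}.} The strategy is to mimic the proof of Proposition \ref{prop:liako}, producing a decomposition of the identity of $\R^n$ into orthogonal projections onto rotated copies of a fixed reference subspace, and then invoke Liakopoulos' inequality \eqref{eq:liako}. First I would fix a $d$-dimensional subspace $F$ and, with no loss of generality (by rotating inside the group that leaves $K$ invariant is not available here, so instead we pick a convenient reference), let $F_0 = \mathrm{span}\{e_1,\ldots,e_d\}$ be the coordinate subspace; the key point is that by averaging $\mathrm{Proj}_{F_0}$ over the signed-permutation group $\{-1,1\}^n \rtimes S_n$ one obtains exactly $\frac{d}{n} I_n$, since for each coordinate the probability that a random permutation sends it into the first $d$ slots is $d/n$ and the sign flips kill all off-diagonal terms. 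Explicitly,
\begin{equation}
\frac{1}{2^n n!} \sum_{\e\in\{-1,1\}^n}\sum_{\sigma\in S_n} D_\e P_\sigma \,\mathrm{Proj}_{F_0}\, P_\sigma^{-1} D_\e = \frac{d}{n} I_n,
\end{equation}
where $P_\sigma$ is the permutation matrix of $\sigma$ and $D_\e = \mathrm{diag}(\e_1,\ldots,\e_n)$; each summand is the orthogonal projection onto the subspace $F_0^{\e,\sigma} \eqdef D_\e P_\sigma F_0$.

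Next I would apply \eqref{eq:liako} with $s = d/n$, the compact set $L = K$, and the family $\{F_0^{\e,\sigma}\}$ with all weights $c_{\e,\sigma} = \frac{1}{2^n n!}$, which gives
\begin{equation}
|K|^{d/n} \leq \prod_{\e\in\{-1,1\}^n}\prod_{\sigma\in S_n} \big|\mathrm{Proj}_{F_0^{\e,\sigma}}(K)\big|^{\frac{1}{2^n n!}}.
\end{equation}
Then the $1$-symmetry of $K$ enters: the map $D_\e P_\sigma$ is a linear isometry of $\R^n$ preserving $K$, so $\mathrm{Proj}_{F_0^{\e,\sigma}}(K) = D_\e P_\sigma\big(\mathrm{Proj}_{F_0}(D_\e P_\sigma)^{-1}K\big) = D_\e P_\sigma\big(\mathrm{Proj}_{F_0}(K)\big)$, whence $|\mathrm{Proj}_{F_0^{\e,\sigma}}(K)| = |\mathrm{Proj}_{F_0}(K)|$ for all $\e,\sigma$ because $D_\e P_\sigma$ is volume-preserving (it has determinant $\pm 1$, and volume in the $d$-dimensional subspace is preserved since it is an isometry). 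Therefore the right-hand side collapses to $|\mathrm{Proj}_{F_0}(K)|$, giving $|K|^{d/n} \le |\mathrm{Proj}_{F_0}(K)|$.

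Finally I would remove the restriction that $F$ be a coordinate subspace. Here $1$-symmetry is not enough to move an arbitrary $F$ to $F_0$, so instead one runs the averaging argument relative to the given $F$ itself: replace $F_0$ by $F$ throughout, i.e. consider the projections $D_\e P_\sigma\,\mathrm{Proj}_F\,P_\sigma^{-1}D_\e = \mathrm{Proj}_{D_\e P_\sigma F}$, and use that $\frac{1}{2^n n!}\sum_{\e,\sigma} D_\e P_\sigma\,\mathrm{Proj}_F\,P_\sigma^{-1} D_\e$ is a symmetric operator invariant under the signed-permutation group; any such operator is a scalar multiple of the identity, and taking traces shows the scalar is $\frac{1}{n}\operatorname{trace}(\mathrm{Proj}_F) = \frac{d}{n}$. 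Then \eqref{eq:liako} plus the $1$-symmetry of $K$ (which gives $|\mathrm{Proj}_{D_\e P_\sigma F}(K)| = |\mathrm{Proj}_F(K)|$, since $D_\e P_\sigma$ preserves both $K$ and $d$-dimensional volume) yields $|K|^{d/n} \le |\mathrm{Proj}_F(K)|$ directly. The only mild subtlety — the step I expect to require the most care in writing up — is verifying that an arbitrary symmetric operator commuting with all of $\{-1,1\}^n \rtimes S_n$ must be scalar (equivalently, that this representation on $\R^n$ has no nontrivial invariant subspace of the right kind other than handling the trace computation directly on the off-diagonal entries); this is a short Schur-type argument, but one should state it cleanly rather than wave at it.
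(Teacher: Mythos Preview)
Your proposal is correct and follows essentially the same route as the paper: average the projections onto the subspaces $D_\e P_\sigma F$ over the signed-permutation group to obtain $\frac{d}{n}I_n$, apply Liakopoulos' inequality \eqref{eq:liako}, and use the $1$-symmetry of $K$ to collapse all the projection volumes to $|\mathrm{Proj}_F K|$. Your detour through the coordinate subspace $F_0$ is unnecessary (as you yourself realize in the final paragraph), and your Schur-type justification of the identity $\frac{1}{2^n n!}\sum_{\e,\sigma} P_{\e,\sigma} = \frac{d}{n} I_n$ via commutation with the group plus a trace count is a clean alternative to the paper's one-line ``simple algebraic computation using bases.''
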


\noindent We shall sketch the proof of Proposition \ref{prop:1sym}. Let $F$ be a $d$-dimensional subspace of $\R^n$. For $\e\in\{-1,1\}^n$ and a permutation $\sigma\in S_n$ consider the subspace
\begin{equation}
F^{\e,\sigma} = \big\{ (\e_1 x_{\sigma(1)},\ldots, \e_n x_{\sigma(n)})\in\R^n: \ (x_1,\ldots,x_n)\in F\big\}
\end{equation}
and denote by $P_{\e,\sigma}$ the orthogonal projection on $F^{\e,\sigma}$. As in the proof of Proposition \ref{prop:liako}, one can easily observe that every 1-symmetric convex body $K\subseteq\R^n$ satisfies
\begin{equation} \label{eq:symmetrydone}
| P_{\e,\sigma} K | = \big| \mr{Proj}_F K\big|,
\end{equation}
for every $\e\in\{-1,1\}^n$ and $\sigma \in S_n$. Moreover, a simple algebraic computation using bases shows that
\begin{equation} \label{eq:algident}
\frac{1}{2^n n!} \sum_{\e\in\{-1,1\}^n} \sum_{\sigma\in S_n} P_{\e,\sigma} = \frac{d}{n} I_n.
\end{equation} 
Therefore, invoking inequality \eqref{eq:liako}, we deduce that
\begin{equation}
|K|^{\frac{d}{n}} \stackrel{\eqref{eq:liako}\wedge\eqref{eq:algident}}{\leq} \prod_{\e\in\{-1,1\}^n} \prod_{\sigma\in S_n} |P_{\e,\sigma} K|^{\frac{1}{2^nn!}} \stackrel{\eqref{eq:symmetrydone}}{=} \prod_{\e\in\{-1,1\}^n} \prod_{\sigma\in S_n} \big|\mr{Proj}_F  K\big|^{\frac{1}{2^nn!}} = \big|\mr{Proj}_F  K\big|,
\end{equation}
for every 1-symetric convex body $K\subseteq\R^n$, thus concluding the proof of Proposition \ref{prop:1sym}.

We note that for $d=n-1$, Proposition \ref{prop:1sym} follows from the main result of \cite{Bal91a}, where Ball proved that every convex body $K\subseteq\R^n$ whose projection body $\Pi K$ is in John's position (see, e.g., \cite{AAGM15}) satisfies
\begin{equation} \label{eq:ballsineq}
\big|\mr{Proj}_{\theta^\perp} K\big| \geq|K|^{\frac{n-1}{n}},
\end{equation}
for every unit vector $\theta$ in $\R^n$. Notice that if $K$ is 1-symmetric, then the same holds true for its projection body $\Pi K$. Therefore, combining Ball's theorem with the well-known fact that every 1-symmetric convex body can be rescaled to be in John's position, we conclude that \eqref{eq:ballsineq} holds for every 1-symmetric convex body $K\subseteq\R^n$ and every unit vector $\theta$ in $\R^n$.

\bibliography{mixedlp}
\bibliographystyle{alpha}
\nocite{*}

\end{document}